\newtheorem{theorem}{Theorem}[section]
\newtheorem{definition}[theorem]{Definition}
\newtheorem{proposition}[theorem]{Proposition}
\newtheorem{corollary}[theorem]{Corollary}
\newtheorem{lemma}[theorem]{Lemma}
\begin{document}

\author{T. Banica}
\address{T.B.: Department of Mathematics, Cergy-Pontoise University, 95000 Cergy-Pontoise, France. {\tt teodor.banica@u-cergy.fr}}

\author{J. Bhowmick}
\address{J.B.: Faculty of Mathematics and Natural Sciences, University of Oslo, Po. Box 1032 Blindern, N-0315 Oslo, Norway. {\tt jyotishb@math.uio.no}}

\author{K. de Commer}
\address{K.D.: Department of Mathematics, Cergy-Pontoise University, 95000 Cergy-Pontoise, France. {\tt kenny.de-commer@u-cergy.fr}}

\title{Quantum isometries and group dual subgroups}

\subjclass[2000]{58J42 (46L87)}
\keywords{Quantum isometry, Diagonal subgroup}

\begin{abstract}
We study the discrete groups $\Lambda$ whose duals embed into a given compact quantum group, $\widehat{\Lambda}\subset G$. In the matrix case $G\subset U_n^+$ the embedding condition is equivalent to having a quotient map $\Gamma_U\to\Lambda$, where $F=\{\Gamma_U|U\in U_n\}$ is a certain family of groups associated to $G$. We develop here a number of techniques for computing $F$, partly inspired from Bichon's classification of group dual subgroups $\widehat{\Lambda}\subset S_n^+$. These results are motivated by Goswami's notion of quantum isometry group, because a compact connected Riemannian manifold cannot have non-abelian group dual isometries.
\end{abstract}

\maketitle

\section*{Introduction}

The quantum groups were introduced in the mid-eighties by Drinfeld \cite{dri} and Jimbo \cite{jim}. Soon after, Woronowicz developed a powerful axiomatization in the compact case \cite{wo1}, \cite{wo2}, \cite{wo3}. His axioms use $\mathbb C$ as a ground field, and the resulting compact quantum groups have a Haar measure, and are semisimple. The Drinfeld-Jimbo quantum groups $G^q$ with $q\in\mathbb R$ (or rather their compact forms) are covered by this formalism. In particular the examples include the twists $G^{-1}$, whose square of the antipode is the identity.

Building on Woronowicz's work, Wang constructed in the mid-nineties a number of new, interesting examples: the free quantum groups \cite{wa1}, \cite{wa2}. Of particular interest are the quantum automorphism groups $G^+(X)$ of the finite noncommutative spaces $X$, constructed in \cite{wa2}. For instance in the simplest case $X=\{1,\ldots,n\}$, the quantum group $G^+(X)$, also known as ``quantum permutation group'', is infinite as soon as $n\geq 4$. This phenomenon, discovered by Wang in \cite{wa2}, was further investigated in \cite{ban}, the main result there being that we have a fusion semiring equivalence $G^+(X)\sim SO_3$, for any finite noncommutative space $X$ subject to the Jones index type condition $|X|\geq 4$.

The next step was to restrict attention to the classical case $X=\{1,\ldots,n\}$, but to add some extra structure: either a metric, or, equivalently, a colored graph structure. The algebraic theory here was developed in \cite{bbi} and in subsequent papers. Also, much work has been done in connecting the representation theory of $G^+(X)$ with the planar algebra formalism of Jones \cite{jon} and with Voiculescu's free probability theory \cite{vdn}, the connection coming via the Collins-\'Sniady integration formula \cite{csn}, and via Speicher's notion of free cumulant \cite{spe}. For some recent developments in this direction, see \cite{bs1}.

A new direction of research, recently opened up by Goswami \cite{go1}, is that of looking at the quantum isometry groups of noncommutative Riemannian manifolds. As for the finite noncommutative spaces, or the various noncommutative spaces in general, these manifolds have in general no points, but they can be described by their spectral data. The relevant axioms here, leading to the unifying notion of ``spectral triple'', were found by Connes \cite{con}, who was heavily inspired by fractal spaces, foliations, and a number of key examples coming from particle physics and from number theory. See \cite{con}, \cite{cma}.

Following Goswami's pioneering paper \cite{go1}, the fundamentals of the theory were developed in \cite{bg1}, \cite{bg2}, \cite{bg3}. The finite-dimensional spectral triples, making the link with the previous work on finite graphs, were studied in \cite{bgs}. A lot of recent work has been done towards the understanding of the discrete group dual case \cite{bsk}, where several diagrammatic and probabilistic tools are known to apply, cf. \cite{bs1}, \cite{lso}. The other important direction of research is that of investigating the quantum symmetries of Connes' Standard Model algebra, with the work here started in the recent papers \cite{bdd}, \cite{bd+}.

There are, however, two main theoretical questions that still lie unsolved, at the foundations of the theory. One of them is whether an arbitrary compact metric space (without Riemannian manifold structure) has a quantum isometry group or not. For the difficulties in dealing with this question, and for some partial results, we refer to \cite{bs2}, \cite{go2}, \cite{hua}, \cite{qsa}.

The second question, which is equally very important, and which actually belongs to a much more concrete circle of ideas, as we will see in this paper, is as follows:

\medskip
\noindent {\bf Conjecture.} {\em A non-classical compact quantum group $G$ cannot act faithfully and isometrically on a compact connected Riemannian manifold $M$.}
\medskip

The first verification here, going back to \cite{bg1}, shows that the sphere $S^n$ has indeed no genuine quantum isometries. Note that this is no longer true for the various noncommutative versions of $S^n$, such as the Podl\'es spheres \cite{bg3}, or the free spheres \cite{bgo}. 

Another key computation is the one in \cite{bho}, where it was proved, via some quite complex algebraic manipulations, that the torus $\mathbb T^k$ has no genuine quantum isometries either. 

Recently Goswami has shown that a large class of homogeneous spaces have no genuine quantum isometries \cite{go3}. This gives strong evidence for the above conjecture.

The starting point for the present work was the following elementary observation, inspired from the work of Boca on ergodic actions in \cite{boc}:

\medskip
\noindent {\bf Fact.} {\em A non-classical discrete group dual $\widehat{\Lambda}$ cannot act faithfully and isometrically on a compact connected Riemannian manifold $M$.}
\medskip

The proof of this fact is quite standard, using a brief computation with group elements. We should mention that this computation needs a bit of geometry, namely the domain property ($f,g\neq 0\implies fg\neq 0$) of eigenfunctions on connected manifolds.

As a first consequence, all the group dual subgroups $\widehat{\Lambda}\subset G$ of a given quantum isometry group $G=G^+(M)$ must be classical. Thus, we are led to the following notions:

\medskip
\noindent {\bf Definition.} {\em A compact quantum group $G$ is called ``basic'' if it is either classical, or has a non-classical group dual subgroup $\widehat{\Lambda}\subset G$, and ``strange'' otherwise.}
\medskip

We should mention that the terminology here, while being quite natural in view of the above fact, can of course seem a bit ackward. Here are a few more explanations:

-- We assume throughout this paper that our compact quantum groups are of Kac type, so that the various $q$-deformations etc. are not concerned by the above dichotomy.

-- As we will see, most ``basic'' examples of compact quantum groups, such as the compact groups, group duals, easy quantum groups etc. are basic in the above sense.

-- One problem however comes from the Kac-Paljutkin quantum group \cite{kpa}, which, while being a very old and fundamental one, is ``strange'' in the above sense. 

-- Summarizing, the terminology in the above definition is just the best one that we could find, based on what we know, and should be of course taken with caution.

Back to our quantum isometry considerations now, the above fact shows that any compact quantum group contradicting the above conjecture must be strange. So, we are naturally led to the problem of understanding the structure of strange quantum groups.

The problem of deciding whether a given compact quantum group is basic or strange is not trivial, and basically requires solving the following question:

\medskip
\noindent {\bf Question.} {\em What are the discrete groups $\Lambda$ whose duals embed into a given compact quantum group, $\widehat{\Lambda}\subset G$?}
\medskip

It is this latter question, which is purely quantum group-theoretical, that we will investigate here. As a first remark, the techniques for dealing with it don't lack:
\begin{enumerate}
\item The guiding result, obtained by Bichon in \cite{bi2}, is that the group dual subgroups $\widehat{\Lambda}\subset S_n^+$ appear from quotients $\mathbb Z_{n_1}*\ldots*\mathbb Z_{n_s}\to\Lambda$, with $n=\sum n_i$.

\item Yet another key result, obtained in \cite{bve}, is that the ``diagonally embedded'' group dual subgroups $\widehat{\Lambda}\subset O_n^*$ appear from quotients $\mathbb Z^{n-1}\rtimes\mathbb Z\to\Lambda$.

\item Franz and Skalski classified in \cite{fsk} all closed subgroups, so in particular all group dual subgroups, of Sekine's quantum groups \cite{sek}.

\item There are some other quantum groups, all whose group dual subgroups can be computed: for instance the Hajac-Masuda quantum double torus \cite{hma}.

\item The homogeneous spaces of type $\widehat{\Lambda}/(\widehat{\Lambda}\cap U_k^+)$, with $\widehat{\Lambda}\subset U_n^+$ closed subgroup and with $k\leq n$, were investigated in the recent paper \cite{bss}.

\item Finally, the projective representations of compact quantum groups, partly generalizing Bichon's formalism in \cite{bi2}, were investigated in \cite{dec}.
\end{enumerate}

The problem is somehow to put all these ingredients together, by using a convenient formalism. In the matrix case $G\subset U_n^+$ this can be done by using the notion of ``diagonal subgroup'' from \cite{bve}, and we have the following answer to the above question:

\medskip
\noindent {\bf Answer.} {\em The closed subgroups $\widehat{\Lambda}\subset G$ appear from quotients $\Gamma_U\to\Lambda$ of a certain family of discrete groups $F=\{\Gamma_U|U\in U_n\}$ associated to $G$.}
\medskip

With this observation in hand, the above considerations can be organized and further processed. First, we will show that the quantum groups in (2) and their generalizations are basic, and that the quantum groups in (3) and (4) are strange. Also, by using some ideas from (5) and (6), we will recover Bichon's groups in (1). The problem of finding a wide generalization of (1-6) above, in terms of the family $F$, remains however open.

The paper is organized as follows: 1 is a preliminary section, in 2 we present some basic results on quantum isometries, in 3 we discuss the actions of group duals, in 4 we develop the general theory of diagonal subgroups, and in 5 we some present explicit computations, in a number of special cases. The final section, 6, contains a few concluding remarks.

\subsection*{Acknowledgements}

Part of this work was done during visits of J.B. at the IHES in August 2011 and at the Cergy-Pontoise University in November 2011, and during a visit of all three authors at the Banach Center in Warsaw in September 2011. The work of T.B. was supported by the ANR grant ``Granma''.

\section{Quantum isometries}

Let $M$ be a compact Riemannian manifold. That is, $M$ is a smooth real manifold, that we will always assume to be compact, and given with a real, positive definite scalar product $<,>$ on each tangent space $T_xM$, depending smoothly on $x$.

\begin{definition}
Associated to a compact Riemannian manifold $M$ are:
\begin{enumerate}
\item $Diff(M)$: the group of diffeomorphisms $\varphi:M\to M$.

\item $G(M)\subset Diff(M)$: the subgroup of isometries $\varphi:M\to M$.
\end{enumerate}
\end{definition}

We use here the non-standard notation $G(M)$ instead of the usual one $ISO(M)$, because this group will be subject to a ``liberation'' operation $G\to G^+$, and the notation $ISO^+(M)$ is traditionally reserved for the group of orientation-preserving isometries.

Let $\Omega^1(M)$ be the space of smooth 1-forms on $M$, with scalar product:
$$<\omega,\eta>=\int_M<\omega(x),\eta(x)>dx$$

Consider the differential $d:C^\infty(M)\to\Omega^1(M)$, and define the Hodge Laplacian $L:L^2(M)\to L^2(M)$ by $L=d^*d$. Note that we use here the non-standard notation $L$ instead of the usual one $\Delta$, because we prefer to keep $\Delta$ for the comultiplication of the above-mentioned ``liberated'' quantum group $G^+(M)$, to be introduced later on.

Following Goswami \cite{go1}, we will make use of the following basic fact:

\begin{proposition}
$G(M)$ is the group of diffeomorphisms $\varphi:M\to M$ whose induced action on $C^\infty(M)$ commutes with the Hodge Laplacian $L=d^*d$. 
\end{proposition}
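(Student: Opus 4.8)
The plan is to prove the two inclusions defining this characterization, the key geometric point being that the Hodge Laplacian $L=d^*d$ remembers exactly the Riemannian metric, and nothing more. Throughout, write $P_\varphi$ for the induced action on functions, $P_\varphi f=f\circ\varphi$, which is an algebra automorphism of $C^\infty(M)$, and let $P_\varphi$ denote as well the natural pullback on $\Omega^1(M)$.

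First I would dispatch the easy inclusion: an isometry commutes with $L$. The exterior derivative is natural under pullback, so $P_\varphi d=d\,P_\varphi$ for \emph{every} diffeomorphism $\varphi$. If moreover $\varphi$ is an isometry, it preserves the Riemannian volume and the induced metric on $1$-forms, so $P_\varphi$ extends to a unitary both on $L^2(M)$ and on the $L^2$-completion of $\Omega^1(M)$ for the inner product $\langle\omega,\eta\rangle=\int_M\langle\omega(x),\eta(x)\rangle\,dx$. Taking adjoints in $P_\varphi d=d\,P_\varphi$ and using unitarity yields $P_\varphi d^*=d^*P_\varphi$, whence $P_\varphi L=P_\varphi d^*d=d^*d\,P_\varphi=L\,P_\varphi$. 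This gives $G(M)\subset\{\varphi:P_\varphi L=L\,P_\varphi\}$.

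For the converse I would show that commuting with $L$ forces $\varphi^*g=g$, by recovering the metric from $L$ purely algebraically via the \emph{carr\'e du champ} identity: for $f,h\in C^\infty(M)$ one has the pointwise formula $\Gamma(f,h):=\tfrac12\big(f\,Lh+h\,Lf-L(fh)\big)=\langle df,dh\rangle_g$, the right-hand side being the fibrewise inner product induced by the metric. Granting this, since $P_\varphi$ is an algebra automorphism it is compatible with all products $fh$, so $P_\varphi L=L\,P_\varphi$ gives $P_\varphi\,\Gamma(f,h)=\Gamma(P_\varphi f,P_\varphi h)$ for all $f,h$. Evaluating this equality at a point $x$, using that $d(f\circ\varphi)_x=df_{\varphi(x)}\circ d\varphi_x$ and that the differentials $df_x$ exhaust the cotangent space $T_x^*M$ as $f$ ranges over $C^\infty(M)$, one reads off $g_{\varphi(x)}(d\varphi\,\cdot\,,d\varphi\,\cdot\,)=g_x$, i.e. $\varphi^*g=g$. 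Hence $\varphi$ is an isometry, which is the reverse inclusion.

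The main obstacle is precisely this converse, and within it the verification of $\Gamma(f,h)=\langle df,dh\rangle_g$: this is where the genuine geometric content sits. It amounts to computing $L=d^*d$ in local coordinates and checking that its second-order part is governed by the inverse metric $g^{ij}$ — equivalently, that the principal symbol of $L$ is $\xi\mapsto|\xi|_g^2$ — so that $L$ determines $g$. Once this local computation is in place, with care about signs since $L=d^*d$ is the geometer's nonnegative Laplacian, both inclusions follow formally as above, and the only remaining subtlety is the routine approximation argument needed to promote the naturality and unitarity statements from smooth forms to the $L^2$ level.
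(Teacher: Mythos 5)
Your argument is correct. The paper itself gives no proof of this proposition -- it is stated as a known fact and attributed to Goswami's paper \cite{go1} -- so there is nothing to compare against line by line; what you have written is the standard, complete argument for this classical characterization. The easy inclusion via naturality of $d$ plus unitarity of the pullback is fine, and your converse correctly identifies the crux: the carr\'e du champ identity $\tfrac12\bigl(fLh+hLf-L(fh)\bigr)=\langle df,dh\rangle_g$ (equivalently, that the principal symbol of $L=d^*d$ is $\xi\mapsto|\xi|_g^2$) recovers the metric from $L$, so a diffeomorphism commuting with $L$ must pull back $g$ to itself. The local-coordinate verification of that identity and the sign convention for $d^*d$ are exactly the points you flag, and both check out.
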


In order to present now some quantum group analogues of the above statements, we use the general formalism developed by Woronowicz in \cite{wo1}, \cite{wo2}, \cite{wo3}. Thus, a compact quantum group will be an abstract object $G$, having no points in general, but which is described by a well-defined Hopf $C^*$-algebra ``of functions'' on it, $A=C(G)$.

The axioms for Hopf $C^*$-algebras, found in \cite{wo3}, are as follows:

\begin{definition}
A Hopf $C^*$-algebra is a unital $C^*$-algebra $A$, given with a morphism of $C^*$-algebras $\Delta:A\to A\otimes A$, subject to the following  conditions:
\begin{enumerate}
\item Coassociativity: $(\Delta\otimes id)\Delta=(id\otimes\Delta)\Delta$.

\item $\overline{span}\,\Delta(A)(A\otimes 1)=\overline{span}\,\Delta(A)(1\otimes A)=A\otimes A$.
\end{enumerate}
\end{definition}

The basic example is $A=C(G)$, where $G$ is a compact group, with $\Delta f(g,h)=f(gh)$. The fact that $\Delta$ is coassociative corresponds to $(gh)k=g(hk)$, and the conditions in (2) correspond to the cancellation rules $gh=gk\implies h=k$ and $gh=kh\implies g=k$. 

Conversely, any commutative Hopf $C^*$-algebra is of the form $C(G)$. Indeed, by the Gelfand theorem we have $A=C(G)$, with $G$ compact space, and (1,2) above tell us that $G$ is a semigroup with cancellation. By a well-known result, $G$ follows to be a group.

The other main example is $A=C^*(\Gamma)$, where $\Gamma$ is a discrete group, with comultiplication $\Delta(g)=g\otimes g$. One can prove that any Hopf $C^*$-algebra which is cocommutative, in the sense that $\Sigma\Delta=\Delta$, where $\Sigma(a\otimes b)=b\otimes a$ is the flip, is of this form.

These basic facts, together with some other general results in \cite{wo3}, lead to:

\begin{definition}
Associated to any Hopf $C^*$-algebra $A$ are a compact quantum group $G$ and a discrete quantum group $\Gamma=\widehat{G}$, according to the formula $A=C(G)=C^*(\Gamma)$.
\end{definition}

The meaning of this definition is of course quite formal. The idea is that, with a suitable definition for morphisms, the Hopf $C^*$-algebras form a category $X$. One can define then the categories of compact and discrete quantum groups to be $\widehat{X}$, and $X$ itself, and these categories extend those of the usual compact and discrete groups. See \cite{wo3}.

Let us go back now to the Riemannian manifolds, and to the groups constructed in Definition 1.1. We cannot define their quantum analogues as being formed of ``quantum bijections'' $\varphi:M\to M$, simply because these quantum bijections do not exist: remember, the quantum groups are abstract objects, having in general no points.

So, we will need the ``spectral'' point of view brought by Proposition 1.2. More precisely, following Goswami \cite{go1}, we can formulate the following definition:

\begin{definition}
Associated to a compact Riemannian manifold $M$ are:
\begin{enumerate}
\item $Diff^+(M)$: the category of compact quantum groups acting on $M$.

\item $G^+(M)\in Diff^+(M)$: the universal object with a coaction commuting with $L$.
\end{enumerate}
\end{definition}

In this definition the quantum group actions are defined in terms of the associated coactions, $\alpha:C(M)\to C(M)\otimes C(G)$, which have to satisfy the smoothness assumption $\alpha(C^\infty(M))\subset C^\infty(M)\otimes C(G)$. As for the commutation condition with $L$, this regards the canonical extension of the action to the space $L^2(M)$. See Goswami \cite{go1}.

Observe that we have an inclusion of compact quantum groups $G(M)\subset G^+(M)$, coming from Proposition 1.2. In the disconnected case, this inclusion is in general proper. In the connected case, this inclusion is conjectured to be an isomorphism \cite{go3}.

\section{Some basic results}

Let us first discuss some examples of genuine quantum group actions, in the disconnected case. We use the following notion, due to Wang \cite{wa1}:

\begin{definition}
Given two compact quantum groups $G,H$ we let
$$C(G\;\hat{*}\;H)=C(G)*C(H)$$
with the Hopf algebra operations extending those of $C(G),C(H)$.
\end{definition}

Here the notation $\;\hat{*}\;$ comes from the fact that this operation is dual to the free product operation $*$ for discrete quantum groups, given by $C^*(\Gamma*\Lambda)=C^*(\Gamma)*C^*(\Lambda)$.

We have the following basic examples of isometric quantum group actions:

\begin{proposition}
Let $M=N_1\sqcup\ldots\sqcup N_k$ be a disconnected manifold.
\begin{enumerate}
\item We have an inclusion $G^+(N_1)\;\hat{*}\;\ldots\;\hat{*}\;G^+(N_k)\subset G^+(M)$.

\item If $G(N_i)\neq\{1\}$ for at least two indices $i$, then $G^+(M)\neq G(M)$.
\end{enumerate}
\end{proposition}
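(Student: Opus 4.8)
The plan is to prove (1) by assembling the individual quantum isometry groups into a single coaction on $C(M)$ and invoking the universal property of $G^+(M)$, and then to derive (2) from (1) by exhibiting a noncommutative quotient of $C(G^+(M))$.

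For (1), I would begin from the orthogonal decomposition $C(M)=C(N_1)\oplus\ldots\oplus C(N_k)$ induced by the disjoint union, under which the Hodge Laplacian splits as a block-diagonal operator $L=L_1\oplus\ldots\oplus L_k$, with $L_i$ the Laplacian of $N_i$. Let $\alpha_i:C(N_i)\to C(N_i)\otimes C(G^+(N_i))$ be the universal isometric coactions, and set $A=C(G^+(N_1))*\ldots*C(G^+(N_k))$ with its canonical Hopf-algebra inclusions $\nu_i:C(G^+(N_i))\to A$ as in Definition 2.1. I would then assemble a single map $\alpha:C(M)\to C(M)\otimes A$ by declaring it to act as $(\mathrm{id}\otimes\nu_i)\alpha_i$ on each block $C(N_i)$ — so that on each component the corresponding free factor acts through its own quantum isometry group while the other factors act trivially. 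Because the images $\alpha(C(N_i))$ lie in mutually orthogonal blocks $C(N_i)\otimes A$, the homomorphism property reduces to that of each $\alpha_i$; the coaction identity $(\alpha\otimes\mathrm{id})\alpha=(\mathrm{id}\otimes\Delta)\alpha$ follows block by block from the identity for $\alpha_i$ together with the fact that each $\nu_i$ intertwines the comultiplications; the density condition $\overline{span}\,\alpha(C(M))(1\otimes A)=C(M)\otimes A$ is obtained on each block from the non-degeneracy of $\alpha_i$ and the unitality of $\nu_i$; and smoothness is inherited blockwise. Commutation with $L$ is immediate from the block structure, since on $C(N_i)$ both $L$ and $\alpha$ are computed from the commuting pair $L_i,\alpha_i$.

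To conclude (1), I would check that $\alpha$ is faithful: its coefficients are the $\nu_i$-images of the coefficients of the $\alpha_i$, and these generate $A$ (each family generates its free factor, and the free factors generate the free product). The universal property of $G^+(M)$ from Definition 1.5 then produces a surjection $C(G^+(M))\to A$, which is precisely the asserted inclusion $G^+(N_1)\;\hat{*}\;\ldots\;\hat{*}\;G^+(N_k)\subset G^+(M)$.

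For (2), I would feed (1) into the classical inclusions $G(N_i)\subset G^+(N_i)$ coming from Proposition 1.2, which give surjections $C(G^+(N_i))\to C(G(N_i))$ and hence, taking free products, a surjection $A\to C(G(N_1))*\ldots*C(G(N_k))$. Composing with the map from (1) yields a surjection $C(G^+(M))\to C(G(N_1))*\ldots*C(G(N_k))$. Assuming $G(N_1),G(N_2)\neq\{1\}$, I would collapse the remaining factors to $\mathbb C$ via their counits and restrict $C(G(N_1))$ and $C(G(N_2))$ to two-point subsets, obtaining a surjection onto $\mathbb C^2*\mathbb C^2$, the universal $C^*$-algebra generated by two free projections. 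Since that algebra is noncommutative, so is $C(G^+(M))$, whereas $C(G(M))$ is commutative; therefore $G^+(M)\neq G(M)$.

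The main obstacle is not the algebra of (2) but ensuring in (1) that the assembled $\alpha$ genuinely qualifies as an object of the category for which $G^+(M)$ is universal — simultaneously a non-degenerate, smooth, $L$-commuting and faithful coaction — so that universality delivers an honest inclusion rather than a mere morphism. I expect the verification of the density condition and of faithfulness, together with the bookkeeping that each $\nu_i$ is a Hopf-algebra morphism, to be the only fiddly steps, everything else being forced by the block-diagonal structure. In (2) the sole real content is the noncommutativity of $\mathbb C^2*\mathbb C^2$, which is exactly where the genuinely quantum nature of $G^+(M)$ manifests itself.
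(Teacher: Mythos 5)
Your proposal is correct and follows essentially the same route as the paper: assemble the universal coactions block-by-block into a coaction of the dual free product on $C(M)=\bigoplus_i C(N_i)$ and invoke universality for (1), then compose with the surjections onto $C(G(N_i))$ and use noncommutativity of the free product for (2). The only difference is that you spell out the verifications (coaction axioms, faithfulness, and the reduction to $\mathbb C^2*\mathbb C^2$) that the paper leaves implicit.
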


\begin{proof}
We use the canonical identification $C(M)=C(N_1)\oplus\ldots\oplus C(N_k)$.

(1) For $i=1,\ldots,k$ let $\alpha_i:C(N_i)\to C(N_i)\otimes C(G_i)$ be isometric coactions of Hopf $C^*$-algebras $C(G_i)$ on the manifolds $N_i$, and let $G=\;\hat{*}\;G_i$. By using the canonical embeddings $C(N_i)\subset C(M)$ and $C(G_i)\subset C(G)$ we can define a map $\alpha:C(M)\to C(M)\otimes C(G)$ by $\alpha(f_1,\ldots,f_k)=\alpha_1(f_1)\ldots\alpha_k(f_k)$, and it follows from definitions that this map is an isometric coaction. With $G_i=G^+(N_i)$, this observation gives the result.

(2) Since we have inclusions $G(N_i)\subset G^+(N_i)$ for any $i$, by taking a dual free product we obtain an inclusion $\hat{*}\;G(N_i)\subset\hat{*}\;G^+(N_i)$. By combining with (1) we obtain an inclusion $\hat{*}\;G(N_i)\subset G^+(M)$, i.e. a surjective morphism $C(G^+(M))\to *C(G(N_i))$. Now since $A,B\neq\mathbb C$ implies that $A*B$ is not commutative, this gives the result.
\end{proof}

Let us investigate now the behavior of $G^+(.)$ with respect to product operations. Given two Riemannian manifolds $M,N$ we can consider their Cartesian product $M\times N$, with scalar product on each tangent space $T_{(x,y)}(M\times N)=T_xM\oplus T_yN$ given by:
$$<u\oplus u',v\oplus v'>=<u,v><u',v'>$$

We use the standard identification $C(M\times N)=C(M)\otimes C(N)$.

\begin{lemma}
$L_{M\times N}=L_M\otimes 1+1\otimes L_N$.
\end{lemma}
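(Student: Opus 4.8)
The plan is to work directly from the defining formula $L=d^*d$ and to exploit the orthogonal splitting of the cotangent bundle of a product. First I would fix the identifications $C^\infty(M\times N)\cong C^\infty(M)\otimes C^\infty(N)$ (algebraic tensor product, dense in $L^2$) and
$$\Omega^1(M\times N)\cong\big(\Omega^1(M)\otimes C^\infty(N)\big)\oplus\big(C^\infty(M)\otimes\Omega^1(N)\big),$$
where the two summands — the horizontal and the vertical $1$-forms — are \emph{orthogonal} for the inner product on $\Omega^1(M\times N)$. This orthogonality is precisely what the product structure $T_{(x,y)}(M\times N)=T_xM\oplus T_yN$ provides: the metric is block-diagonal with no mixed components, so a horizontal and a vertical form pair to zero pointwise, hence globally once integrated against the product volume form (which factorizes by Fubini).

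Next I would record the splitting of the exterior derivative on functions: for $f\in C^\infty(M)$ and $g\in C^\infty(N)$,
$$d_{M\times N}(f\otimes g)=(d_Mf)\otimes g+f\otimes(d_Ng),$$
the first term horizontal and the second vertical. The crux is then to identify the adjoint $d^*_{M\times N}$. I would show that, thanks to the orthogonal decomposition above and the factorization of the inner product, $d^*_{M\times N}$ preserves the horizontal/vertical splitting and restricts to $d_M^*\otimes 1$ on horizontal forms and to $1\otimes d_N^*$ on vertical forms. Granting this, one gets
$$L_{M\times N}(f\otimes g)=d^*_{M\times N}\big((d_Mf)\otimes g+f\otimes(d_Ng)\big)=(d_M^*d_Mf)\otimes g+f\otimes(d_N^*d_Ng)=(L_Mf)\otimes g+f\otimes(L_Ng),$$
which is exactly $(L_M\otimes 1+1\otimes L_N)(f\otimes g)$.

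The main obstacle is this claimed splitting of $d^*$, i.e. that no cross terms such as $d_M^*$ applied to a vertical form survive. Concretely I would verify it by integration by parts: to compute $d^*(\omega\otimes g)$ for a horizontal form $\omega\otimes g$, one pairs it against a test function $f'\otimes g'$ and uses that $\langle\omega\otimes g,\,d(f'\otimes g')\rangle$ reduces, by orthogonality of horizontal and vertical forms, to $\langle\omega\otimes g,\,(d_Mf')\otimes g'\rangle$; Fubini and the definition of $d_M^*$ on $M$ then identify the adjoint as $d_M^*\otimes 1$, the vertical case being symmetric. This is most transparent in local product coordinates, where $\sqrt{|g|}=\sqrt{|g_M|}\,\sqrt{|g_N|}$ and $g^{\mu\nu}$ is block-diagonal, so that the Laplace--Beltrami expression $-\tfrac{1}{\sqrt{|g|}}\partial_\mu(\sqrt{|g|}\,g^{\mu\nu}\partial_\nu\,\cdot\,)$ separates additively into its $M$- and $N$-parts with vanishing mixed contributions. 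Finally, since the elementary tensors $f\otimes g$ span a core for both sides, the operator identity on all of $L^2(M\times N)$ follows by self-adjointness and continuity.
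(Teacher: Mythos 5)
Your argument is correct and follows essentially the same route as the paper: both rest on the orthogonal splitting of $\Omega^1(M\times N)$ into horizontal and vertical forms, the Leibniz formula $d=d_M\otimes id+id\otimes d_N$, and adjointness to kill the cross terms. The paper packages this as a single weak-pairing computation of $\langle d^*d(f\otimes g),h\otimes k\rangle$, whereas you first identify $d^*$ on each summand, but the substance is identical.
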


\begin{proof}
This follows from the fact that the whole de Rham complex for $M\times N$ decomposes as a ``tensor product'' of the de Rham complexes for $M,N$. First, we have:
$$\Omega^k(M\times N)=\bigoplus_{i+j=k}\Omega^i(M)\otimes\Omega^j(N)$$

Also, the differential is $d=d_M\otimes id+id\otimes d_N$. Thus, we get:
\begin{eqnarray*}
<d^*d(f\otimes g),h\otimes k>
&=&<d_Mf\otimes g+f\otimes d_Ng,d_Mh\otimes k+h\otimes d_Nk>\\
&=&<d_Mf\otimes g,d_Mh\otimes k>+<f\otimes d_Ng,h\otimes d_Nk>\\
&=&<d_M^*d_Mf\otimes g,h\otimes k>+<f\otimes d_N^*d_Ng,h\otimes k>
\end{eqnarray*}

This gives $d^*d=d_M^*d_M\otimes 1+1\otimes d_N^*d_N$, as claimed.
\end{proof}

Observe that the above operation is ``compatible'' with the product operation for graphs in \cite{bbi}, given at the level of adjacency matrices by $d_{X\times Y}=d_X\otimes 1+1\otimes d_Y$.

\begin{theorem}
Assume that $M,N$ are connected and that their spectra $\{\lambda_i\}$ and $\{\mu_j\}$ ``don't mix'', in the sense that we have $\{\lambda_i-\lambda_j\}\cap\{\mu_i-\mu_j\}=\{0\}$. Then:
\begin{enumerate}
\item $G(M\times N)=G(M)\times G(N)$.

\item $G^+(M\times N)=G^+(M)\times G^+(N)$.
\end{enumerate}
\end{theorem}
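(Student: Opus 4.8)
The plan is to reduce everything to the spectral decomposition supplied by Lemma 2.4. Write $L^2(M)=\bigoplus_i E_i$ and $L^2(N)=\bigoplus_j F_j$ for the eigenspace decompositions of $L_M,L_N$, with $E_i$ the $\lambda_i$-eigenspace and $F_j$ the $\mu_j$-eigenspace; each is finite-dimensional, and since $M,N$ are connected the lowest eigenspaces are $E_0=\mathbb C1_M$ and $F_0=\mathbb C1_N$. By Lemma 2.4 the operator $L_{M\times N}$ acts on $E_i\otimes F_j$ as the scalar $\lambda_i+\mu_j$. The first thing I would record is that the ``don't mix'' hypothesis makes $(i,j)\mapsto\lambda_i+\mu_j$ injective: if $\lambda_i+\mu_j=\lambda_{i'}+\mu_{j'}$ then $\lambda_i-\lambda_{i'}=\mu_{j'}-\mu_j$ lies in $\{\lambda_a-\lambda_b\}\cap\{\mu_a-\mu_b\}=\{0\}$, forcing $i=i'$ and $j=j'$. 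Hence every eigenspace of $L_{M\times N}$ is exactly one summand $E_i\otimes F_j$; in particular the $\lambda_i$-eigenspace is $E_i\otimes\mathbb C1_N$ and the $\mu_j$-eigenspace is $\mathbb C1_M\otimes F_j$.

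For (1), let $\varphi\in G(M\times N)$. Its pullback commutes with $L_{M\times N}$, hence preserves each eigenspace, so it preserves $E_i\otimes\mathbb C1_N$ for all $i$ and therefore the subalgebra $C(M)\otimes 1$; symmetrically it preserves $1\otimes C(N)$. Preservation of $C(M)\otimes 1$ says that, for the projection $\pi_M\colon M\times N\to M$, the map $\pi_M\circ\varphi$ is independent of the $N$-variable, and likewise $\pi_N\circ\varphi$ is independent of the $M$-variable, so $\varphi(x,y)=(\varphi_1(x),\varphi_2(y))$ for diffeomorphisms $\varphi_1,\varphi_2$; the product metric then forces each $\varphi_i$ to be an isometry. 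This gives $G(M\times N)\subseteq G(M)\times G(N)$, and the reverse inclusion is clear.

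For (2) I would argue by a double inclusion of universal objects. The easy half: given the universal coactions $\alpha_M,\alpha_N$ of $G^+(M),G^+(N)$, the ``product'' coaction $f\otimes g\mapsto (\alpha_M f)_{13}(\alpha_N g)_{24}$ commutes with $L_{M\times N}=L_M\otimes 1+1\otimes L_N$, so by universality of $G^+(M\times N)$ it yields a surjection $\iota\colon C(G^+(M\times N))\to C(G^+(M))\otimes C(G^+(N))$, i.e. $G^+(M)\times G^+(N)\subset G^+(M\times N)$. Conversely, let $\alpha$ be the universal coaction of $G=G^+(M\times N)$. Because $\alpha$ preserves the eigenspace $E_i\otimes\mathbb C1_N$, it restricts to a coaction $\alpha_M^G\colon C(M)\to C(M)\otimes C(G)$ commuting with $L_M$, which by universality of $G^+(M)$ factors as a morphism $\rho_M\colon C(G^+(M))\to C(G)$; symmetrically one gets $\rho_N$. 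The crux is that the images of $\rho_M,\rho_N$ commute: writing $\alpha(f\otimes 1)=\sum_k f_k\otimes 1\otimes a_k$ and $\alpha(1\otimes g)=\sum_l 1\otimes g_l\otimes b_l$ for $f\in E_i,g\in F_j$, multiplicativity of $\alpha$ applied to the two orders of $(f\otimes 1)(1\otimes g)=(1\otimes g)(f\otimes 1)$ gives $\sum_{k,l}f_k\otimes g_l\otimes a_kb_l=\sum_{k,l}f_k\otimes g_l\otimes b_la_k$, and linear independence of the $f_k$ and $g_l$ forces $a_kb_l=b_la_k$.

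With commuting images, $x\otimes y\mapsto\rho_M(x)\rho_N(y)$ defines a morphism $\rho\colon C(G^+(M))\otimes C(G^+(N))\to C(G)$; it is surjective because, by the injectivity above, every eigenspace of $L_{M\times N}$ is a single $E_i\otimes F_j$, so all matrix coefficients of $\alpha$ have the form $a_kb_l$ and hence lie in the image of $\rho$, and these coefficients generate $C(G)$. Checking that $\rho$ and $\iota$ are mutually inverse on the generating coefficients then identifies $G^+(M\times N)$ with $G^+(M)\times G^+(N)$. I expect the main obstacle to be precisely the twofold use of the ``don't mix'' hypothesis: it is what guarantees, first, that $\alpha$ sends $E_i\otimes\mathbb C1_N$ back into functions of $M$ alone, so that $\alpha_M^G$ is a well-defined coaction, and second, that every eigenspace is a pure product $E_i\otimes F_j$, so that $\rho$ is onto. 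Without it one would obtain only a free-product type embedding rather than a genuine direct product, and the short commutation computation for the $a_k,b_l$ is exactly what upgrades the embedding to the product structure.
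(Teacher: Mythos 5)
Your proof is correct and follows essentially the same route as the paper: both rest on the observation that the non-mixing hypothesis makes $\lambda_i+\mu_j$ distinct, so that the spectral projections of $L_{M\times N}$ are exactly the $P_{\lambda_i}\otimes Q_{\mu_j}$, and that the universal coaction therefore preserves $C(M)\otimes 1$ and $1\otimes C(N)$ and splits as a tensor product. The only cosmetic differences are that the paper deduces (1) from (2) by passing to the classical version rather than arguing geometrically, and that you spell out the commutation and surjectivity details which the paper delegates to the analogous graph-theoretic argument in \cite{bbi}.
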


\begin{proof}
We follow the proof in \cite{bbi}, where a similar result was proved for finite graphs. Since the classical symmetry group is the classical version of the quantum isometry group, it is enough to prove the second assertion, for the quantum isometry groups.

Let $L_M=\sum_\lambda\lambda\cdot P_\lambda$ and $L_N=\sum_\mu\mu\cdot Q_\mu$ be the formal spectral decompositions of $L_M,L_N$. Since we have $L_{M\times N}=L_M\otimes 1+1\otimes L_N$, we get:
$$L_{M\times N}=\sum_{\lambda\mu}(\lambda+\mu)\cdot(P_\lambda\otimes Q_\nu)$$

The non-mixing assumption in the statement tells us that the scalars $\lambda+\mu$ appearing in this formula are distinct. Since the projections $P_\lambda\otimes Q_\nu$  form a partition of the unity, it follows that the above formula is the formal spectral decomposition of $L_{M\times N}$.

We can conclude now as in \cite{bbi}. The universal coaction of $G^+(M\times N)$ must commute with any spectral projection $P_\lambda\otimes Q_\mu$, and hence with both the following projections:
\begin{eqnarray*}
P_0\otimes 1&=&\sum_\mu P_0\otimes Q_\mu\\
1\otimes Q_0&=&\sum_\lambda P_\lambda\otimes Q_0
\end{eqnarray*}

Now since $M,N$ are connected, the above projections are both 1-dimensional. It follows that the universal coaction of $G^+(M\times N)$ is the tensor product of  its restrictions to the images of $P_0\otimes 1$, i.e. to $1\otimes C(N)$, and of $1\otimes Q_0$, i.e. to $C(M)\otimes 1$, and we are done.
\end{proof}

\begin{corollary}
If $M,N$ are connected, without quantum isometries, and their spectra don't mix, then $M\times N$ doesn't have quantum isometries either.
\end{corollary}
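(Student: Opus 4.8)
The plan is to derive everything directly from Theorem 2.4, since the corollary is essentially a restatement of that theorem under the extra hypothesis that $M$ and $N$ individually have no quantum isometries. First I would unpack the terminology: saying that a connected manifold $M$ ``has no quantum isometries'' means precisely that the inclusion $G(M)\subset G^+(M)$ coming from Proposition 1.2 is an equality, i.e. $G^+(M)=G(M)$. So the hypotheses supply me with $G^+(M)=G(M)$ and $G^+(N)=G(N)$.

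Next I would check that the hypotheses of Theorem 2.4 are actually in force. The corollary assumes $M,N$ connected with non-mixing spectra, which are exactly the two standing assumptions of that theorem; hence both of its conclusions are available to me without further work.

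The argument then reduces to a short chain of equalities. By part (2) of Theorem 2.4,
$$G^+(M\times N)=G^+(M)\times G^+(N).$$
Substituting the hypotheses $G^+(M)=G(M)$ and $G^+(N)=G(N)$ rewrites the right-hand side as $G(M)\times G(N)$, and by part (1) of the same theorem this equals $G(M\times N)$. Therefore $G^+(M\times N)=G(M\times N)$, which is exactly the assertion that $M\times N$ has no quantum isometries.

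I do not expect any genuine obstacle: all the real content—the spectral-decomposition argument showing that the universal coaction on $M\times N$ splits as a tensor product—has already been carried out in the proof of Theorem 2.4. The only points requiring a moment's care are matching the informal phrase ``without quantum isometries'' to the formal equality $G^+=G$, and confirming that the non-mixing condition I invoke for the product is literally the one assumed in the hypotheses; both are immediate.
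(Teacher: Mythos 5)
Your proof is correct and is exactly the intended unpacking of the paper's one-line argument, which simply cites Theorem 2.4: interpreting ``no quantum isometries'' as $G^+=G$ and chaining the two parts of that theorem is all that is needed. No issues.
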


\begin{proof}
This is clear from Theorem 2.4.
\end{proof}

Observe that this kind of statement, and the above algebraic technology in general, is far below from what would be needed for attacking the conjecture in the introduction. For instance our results don't cover the torus $\mathbb T^k$, obtained as a ``mixing'' product of $k$ circles, and which is known from \cite{bho} not to have genuine quantum isometries.

\section{Group dual actions}

As already mentioned in the introduction, the results in \cite{bho}, \cite{bg1}, along with the recent ones in \cite{go3}, and also with those in the previous section, give some substantial evidence for the conjectural statement ``$M$ classical and connected implies $G(M)=G^+(M)$''.

One way of attacking this conjecture would be by trying to extend first Goswami's recent results of homogeneous spaces in \cite{go3}. Another possible way would be by trying to extend first Theorem 2.4 above, as to cover the computations in \cite{bho} for the torus.

Yet another method, that we believe to be important as well, is by using group dual subgroups. It is known indeed since the work of Boca \cite{boc} that a compact space cannot have a genuine ergodic group dual action. We will use here the same kind of idea.

We recall that for a connected Riemannian manifold $M$, the eigenfunctions of the Laplacian have the domain property, namely $f,g\neq 0$ implies $fg\neq 0$. This is for instance because the set of zeros of each nonzero eigenfunction of the Laplacian is known to have Hausdorff dimension $\dim M-1$, and hence measure zero. See e.g. \cite{yau}.

By using now the same computation as in \cite{boc}, we get:

\begin{proposition}
A compact connected Riemannian manifold $M$ cannot have genuine group dual isometries.
\end{proposition}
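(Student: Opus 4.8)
The plan is to translate a group dual isometry into concrete algebraic data and then exploit a tension between the commutativity of $C(M)$ and the group law of $\Lambda$. An inclusion $\widehat{\Lambda}\subset G^+(M)$ amounts to a faithful isometric coaction $\alpha:C(M)\to C(M)\otimes C^*(\Lambda)$ which, in the sense of Definition 1.5, commutes with $L$ on $L^2(M)$; the assertion to prove is that such a coaction forces $\Lambda$ to be abelian, i.e. that $\widehat{\Lambda}$ is classical. Following Boca's idea from \cite{boc}, I would read off from $\alpha$ the $\Lambda$-grading it induces on the smooth algebra and then let the domain property do the rest.

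Concretely, first I would set $A_g=\{f\in C^\infty(M)\mid\alpha(f)=f\otimes g\}$ for each $g\in\Lambda$. Since $\alpha$ is a unital $*$-homomorphism and $\Delta(g)=g\otimes g$ with $g^*=g^{-1}$ in $C^*(\Lambda)$, one checks directly that $1\in A_e$, that $A_g^*=A_{g^{-1}}$, and, crucially, that $A_gA_h\subseteq A_{gh}$; moreover the $A_g$ are linearly independent, since distinct group elements are independent in $C^*(\Lambda)$, so $\bigoplus_g A_g$ is a genuine $\Lambda$-graded dense subalgebra. The commutation of $\alpha$ with $L$ shows that each spectral projection $P_\lambda$ commutes with $\alpha$ and hence preserves every $A_g$; consequently a nonzero $A_g$ must contain a nonzero eigenfunction of $L$, namely $P_\lambda f$ for a suitable $\lambda$. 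Finally, faithfulness of the coaction guarantees that the set $\{g\mid A_g\neq 0\}$ generates $\Lambda$.

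The clash is then immediate. Given $g,h$ with $A_g,A_h\neq 0$, I would choose nonzero eigenfunctions $f\in A_g$ and $f'\in A_h$. The domain property recalled just before the statement gives $ff'\neq 0$, while $A_gA_h\subseteq A_{gh}$ places $ff'$ in degree $gh$. But $C^\infty(M)$ is commutative, so $ff'=f'f$ also lies in degree $hg$; since the grading is a direct sum, $ff'\neq 0$ forces $gh=hg$. As the elements $g$ with $A_g\neq 0$ generate $\Lambda$, this makes $\Lambda$ abelian, which is exactly the claim.

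I expect the genuine work to be not the final clash, which is a one-line consequence of commutativity plus the domain property, but the functional-analytic bookkeeping underlying the grading: passing cleanly between $C(M)$, $C^\infty(M)$ and $L^2(M)$, verifying that the spectral subspaces $A_g$ are stable under the relevant operations and span densely, and, the step I would be most careful about, showing that every nonzero $A_g$ contains an honest smooth eigenfunction so that the domain property genuinely applies. Extracting from faithfulness the statement that the supporting set generates $\Lambda$ is the other point deserving a careful justification.
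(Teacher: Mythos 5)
Your argument is correct and follows essentially the same route as the paper's proof: both diagonalize the coaction by group elements on eigenspaces of $L$ (your grading $A_g$ versus the paper's choice of a diagonalizing basis of $E_1\oplus E_2$), and both derive $gh=hg$ from the commutativity of $C(M)$ combined with the domain property of eigenfunctions on a connected manifold. The points you flag for care (spectral projections preserving the $A_g$, and faithfulness giving a generating support) are exactly the implicit steps in the paper's ``with $E$ varying'' conclusion, so there is no substantive difference.
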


\begin{proof}
Assume that we have a group dual coaction $\alpha:C(M)\to C(M)\otimes C^*(\Gamma)$.

Let $E=E_1\oplus E_2$ be the direct sum of two eigenspaces of $L$. Pick a basis $\{x_i\}$ such that the corepresentation on $E$ becomes diagonal, i.e. $\alpha(x_i)=x_i\otimes g_i$ with $g_i\in\Gamma$. The formula $\alpha(x_ix_j)=\alpha(x_jx_i)$ reads $x_ix_j\otimes g_ig_j=x_ix_j\otimes g_jg_i$, and by using the domain property we obtain $g_ig_j=g_jg_i$. Also, the formula $\alpha(x_i\bar{x}_j)=\alpha(\bar{x}_jx_i)$ reads $x_i\bar{x}_j\otimes g_ig_j^{-1}=x_i\bar{x}_j\otimes g_j^{-1}g_i$, and by using the domain property again, we obtain $g_ig_j^{-1}=g_j^{-1}g_i$. Thus the elements $\{g_i,g_i^{-1}\}$ mutually commute, and with $E$ varying, this shows that $\Gamma$ is abelian. 
\end{proof}

The above result is quite interesting, because it shows that all the group dual subgroups of a given quantum isometry group must be classical. More precisely, let us first divide the compact quantum groups into two classes, as suggested in the introduction:

\begin{definition}
We call a compact quantum group $G$:
\begin{enumerate}
\item ``Strange'', if it is non-classical, and all its group dual subgroups are classical.

\item ``Basic'', if not (i.e. is either classical, or has a non-classical group dual subgroup).
\end{enumerate}
\end{definition}

Observe that this definition is purely algebraic, making no reference to manifolds and to their quantum isometry groups. As we will soon see, most known examples of quantum groups are basic, and this can be usually checked by purely algebraic computations. However, we will see as well that several classes of strange quantum groups exist.

The relation with the quantum isometry groups comes from:

\begin{proposition}
A non-classical compact quantum group $G$ acting isometrically on a compact connected Riemannian manifold $M$ must be strange.
\end{proposition}

\begin{proof}
This is just a reformulation of Proposition 3.1, by using Definition 3.2.
\end{proof}

Thus, a counterexample to the conjecture in the introduction could only come from a strange quantum group. So, let us try to understand what these quantum groups are. 

We begin with a few results on the basic quantum groups. We recall that given two compact quantum groups $G,H$ we can form their product $G\times H$, and their dual free product $G\;\hat{*}\;H$. In the case where $G,H\subset K$ are subgroups of  a given compact quantum group, we can form the generated group $<G,H>\subset K$. Finally, if $H$ is a quantum permutation group, we can form the free wreath product $G\wr_*H$. See \cite{bi1}, \cite{wa1}.

\begin{proposition}
The class of basic quantum groups has the following properties:
\begin{enumerate}
\item It contains all classical groups, and all group duals.

\item It is stable by products, and by taking generating groups.

\item It is stable by dual free products, and free wreath products.
\end{enumerate}
\end{proposition}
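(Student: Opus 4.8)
The plan is to verify each of the three stability claims by exhibiting, for a basic quantum group in the relevant construction, a non-classical group dual subgroup (unless the whole thing is already classical). Throughout, the key mechanism is that a group dual subgroup $\widehat{\Lambda}\subset G$ corresponds to a surjection $C(G)\to C^*(\Lambda)$, equivalently to a family of commuting group-like unitaries inside $C(G)$ whose generated $C^*$-algebra is $C^*(\Lambda)$; ``non-classical'' means $\Lambda$ is non-abelian. So the entire argument reduces to tracking such group-like elements through each construction.

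For part (1), if $G$ is classical it is basic by definition. If $G=\widehat{\Gamma}$ is a group dual, then taking $\Lambda=\Gamma$ gives the tautological group dual subgroup $\widehat{\Gamma}\subset\widehat{\Gamma}$; if $\Gamma$ is abelian then $\widehat{\Gamma}$ is classical (hence basic), and if $\Gamma$ is non-abelian this exhibits a non-classical group dual subgroup, so again basic. For part (2), suppose $G$ is basic, say with a non-classical $\widehat{\Lambda}\subset G$. For the product $G\times H$, I would compose the surjection $C(G\times H)=C(G)\otimes C(H)\to C(G)\to C^*(\Lambda)$ (using the counit of $C(H)$ on the second leg) to produce $\widehat{\Lambda}\subset G\times H$; this is a genuine Hopf-algebra quotient because the comultiplication on $C(G\times H)$ restricts correctly on the $C(G)$-leg. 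For the generated group $\langle G,H\rangle\subset K$, the inclusion $G\subset\langle G,H\rangle$ gives a surjection $C(\langle G,H\rangle)\to C(G)\to C^*(\Lambda)$, so the same $\widehat{\Lambda}$ survives.

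For part (3), the dual free product $G\;\hat{*}\;H$ contains $G$ as a subgroup via the surjection $C(G)*C(H)\to C(G)$, so any non-classical group dual subgroup of $G$ persists, giving basicness. For the free wreath product $G\wr_*H$, I would again use that $G$ (more precisely, one diagonal copy of it) embeds as a subgroup, so that a non-classical $\widehat{\Lambda}\subset G$ yields a surjection onto $C^*(\Lambda)$; alternatively one can feed a non-classical group dual from $G$ through the defining surjection $C(G\wr_*H)\to C(G)$ onto one of the free copies. The main obstacle I anticipate is the free wreath product case: unlike the other constructions it is not simply an inclusion of $G$, and I must check carefully that the relevant coordinate projections and the magic unitary of $H$ do not obstruct the Hopf-algebra quotient, i.e.\ that restricting to a single diagonal block really does produce a well-defined surjection $C(G\wr_*H)\to C(G)$ compatible with comultiplication. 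Once that compatibility is confirmed, every case reduces to the observation that ``$G$ sits inside the construction as a quantum subgroup,'' and the group dual subgroup is simply inherited.
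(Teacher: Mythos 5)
Your treatment of (1) and (2) is essentially the paper's: basicness is inherited along the quantum subgroup inclusions $G\subset G\times H$ and $G\subset <G,H>$, the remaining case (both factors classical) being handled by the trivial remark that the resulting group is then itself classical. The genuine gap is in part (3). Your argument for the dual free product only covers the case where $G$ (or $H$) already possesses a non-classical group dual subgroup, which you then push into $G\;\hat{*}\;H$ through the quotient $C(G)*C(H)\to C(G)$. But the essential case is the one where both $G$ and $H$ are classical and nontrivial: each is then basic for the trivial reason of being classical, so there is nothing to inherit, while $G\;\hat{*}\;H$ is \emph{not} classical, since $C(G)*C(H)$ is noncommutative as soon as $C(G),C(H)\neq\mathbb C$. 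To prove basicness here you must manufacture a non-classical group dual subgroup of $G\;\hat{*}\;H$ from scratch. The paper does this by choosing nontrivial group dual subgroups $\widehat{\mathbb Z}_a\subset G$ and $\widehat{\mathbb Z}_b\subset H$ with $a,b\in\{2,3,\ldots,\infty\}$ (every nontrivial compact group contains a nontrivial closed abelian, hence group dual, subgroup) and observing that these generate $\widehat{\mathbb Z_a}\;\hat{*}\;\widehat{\mathbb Z_b}=\widehat{\mathbb Z_a*\mathbb Z_b}\subset G\;\hat{*}\;H$, with $\mathbb Z_a*\mathbb Z_b$ non-abelian. Without this step the statement is simply not proved in the classical-times-classical case, which is the only nontrivial one.

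The same gap recurs in the free wreath product, and it is more serious than the technical worry you flag. Even granting that some copy of $G$ sits inside $G\wr_*H$ as a quantum subgroup, inheriting from it gives nothing when $G$ is classical; yet $G\wr_*H$ is non-classical whenever $G\neq\{1\}$ is classical and $H\subset S_n^+$ acts on $n\geq 2$ points. The repair is again to use \emph{two} distinct legs of the construction: for instance the quantum subgroup $G\;\hat{*}\;\cdots\;\hat{*}\;G\subset G\wr_*H$ obtained by specializing the magic unitary of $H$ to the identity reduces the problem to the dual free product case just discussed. So the decisive idea you are missing is that stability under these free constructions is not a matter of inheritance from one factor, but of combining nontrivial abelian group duals from two free positions into a non-abelian free product.
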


\begin{proof}
(1) This is clear by definition.

(2) Assume indeed that $G,H$ are basic. For the product assertion, if $G,H$ are classical then $G\times H$ is classical, and we are done. If not, assume for instance that $\widehat{\Gamma}\subset G$ is non-classical. Then $\widehat{\Gamma}\times 1\subset G\times H$ is non-classical, and we are done again.

The generating assertion follows similarily, by replacing $\times$ by $<,>$.

(3) Assume that $G,H$ are basic. For the free product assertion, if $G=\{1\}$ or $H=\{1\}$ we are done. If not, assume first that both $G,H$ are classical. If we pick subgroups $\widehat{\mathbb Z}_a\subset G$ and $\widehat{\mathbb Z}_b\subset H$ with $a,b\in\{2,3,\ldots,\infty\}$, with the convention $\mathbb Z_\infty=\mathbb Z$, then $\Gamma=\mathbb Z_a*\mathbb Z_b$ is non-abelian and $\widehat{\Gamma}\subset G\;\hat{*}\;H$, and we are done again. Finally, if for instance $\widehat{\Gamma}\subset G$ is non-classical, then $\widehat{\Gamma}\subset G\;\hat{*}\;H$ is non-classical either, and we are done again.

The free wreath product assertion follows similarily, by replacing Wang's dual free product operation $\hat{*}$ with Bichon's free wreath product operation $\wr_*$ from \cite{bi1}. 
\end{proof}

In fact, most of known compact quantum groups are basic. Here is a verification for the main examples of ``easy'' quantum groups, introduced in \cite{bsp} and studied in \cite{bcs}:

\begin{proposition}
The main examples of easy quantum groups are all basic:
\begin{enumerate}
\item The classical ones: $O_n,S_n,H_n,B_n,S_n',B_n'$.

\item The free ones: $O_n^+,S_n^+,H_n^+,B_n^+,S_n'^+,B_n'^+$.

\item The half-liberated ones: $O_n^*,H_n^*$.
\end{enumerate}
\end{proposition}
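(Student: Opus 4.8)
The plan is to exhibit, for each quantum group in the list, a concrete non-classical group dual subgroup $\widehat{\Lambda}\subset G$, except in the genuinely classical cases (1), where the conclusion is immediate from Proposition 3.4(1). So I would first dispense with the classical row (1) by simply invoking that a classical group is basic by definition.

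For the free cases (2), the key structural fact I would use is that each of these quantum groups contains a free quantum permutation group, or more directly contains $S_n^+$ in an appropriate sense, and that $S_n^+$ has non-classical group dual subgroups by Bichon's result, listed as ingredient (1) in the introduction: the duals $\widehat{\Lambda}\subset S_n^+$ arise from quotients $\mathbb{Z}_{n_1}*\ldots*\mathbb{Z}_{n_s}\to\Lambda$ with $n=\sum n_i$. Concretely, for $n\geq 2$ one can already take the quotient $\mathbb{Z}_2*\mathbb{Z}_2\to\mathbb{Z}_2*\mathbb{Z}_2$ (the infinite dihedral group) sitting inside $S_4^+$, which is non-abelian, hence its dual is non-classical. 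The cleanest route is therefore: show $S_n^+\subset G$ for each $G$ in row (2) (this holds because the standard generating magic unitary for $S_n^+$ maps into the fundamental corepresentation of each of $O_n^+,H_n^+,B_n^+$, etc., these all containing $S_n^+$ as an easy-quantum-group subgroup via the inclusion of categories of partitions), and then transport Bichon's non-classical dual along the inclusion $\widehat{\Lambda}\subset S_n^+\subset G$.

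For the half-liberated cases (3), I would instead lean on ingredient (2) from the introduction, the result of \cite{bve}: the diagonally embedded group dual subgroups $\widehat{\Lambda}\subset O_n^*$ arise from quotients $\mathbb{Z}^{n-1}\rtimes\mathbb{Z}\to\Lambda$. Since $\mathbb{Z}^{n-1}\rtimes\mathbb{Z}$ is non-abelian for $n\geq 2$ (the semidirect product with a nontrivial action), one obtains a non-classical group dual subgroup of $O_n^*$ directly. For $H_n^*$ I would produce a compatible diagonal embedding, either by the analogous half-liberation computation or by exhibiting $H_n^*$ as containing a suitable copy of a structure carrying the same $\mathbb{Z}^{n-1}\rtimes\mathbb{Z}$ dual; the hyperoctahedral relations are weaker than the orthogonal ones in the half-liberated setting, so the diagonal torus computation should go through analogously.

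The main obstacle I anticipate is the half-liberated row (3), specifically verifying that the non-abelian diagonal dual really does embed as a genuine \emph{quantum} subgroup rather than merely a classical torus: one must check that the half-commutation relations $abc=cba$ are satisfied by the group elements $g_i$ realizing the embedding, and that the resulting $\Lambda$ is non-abelian, which is exactly the content of the $\mathbb{Z}^{n-1}\rtimes\mathbb{Z}$ structure in \cite{bve}. For $H_n^*$ the corresponding computation is not immediately quoted in the introduction, so I would need either to redo the diagonal-subgroup analysis for the half-liberated hyperoctahedral relations or to reduce $H_n^*$ to $O_n^*$ via an inclusion $H_n^*\subset O_n^*$ and restrict the dual; the latter reduction is the cleaner strategy, provided the non-classical $\widehat{\Lambda}\subset O_n^*$ can be arranged to land inside $H_n^*$.
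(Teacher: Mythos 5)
Your strategy for rows (1) and for the free row at $n\geq 4$ is exactly the paper's: classical groups are basic by definition, and for $n\geq 4$ one uses $\widehat{D}_\infty\subset S_4^+\subset S_n^+\subset G_n^+$ together with Bichon's description of group dual subgroups of $S_n^+$. However, there is a genuine gap in row (2) at $n=2,3$: there $S_n^+=S_n$ is \emph{classical}, so the chain through $S_n^+$ produces no non-classical group dual, while several of the free quantum groups at these sizes are nevertheless non-classical and must be handled ($O_2^+$, $H_2^+=O_2^{-1}$, $O_3^+$, $B_3^+\simeq O_2^+$, $B_3'^+$, $H_3^+$). The paper devotes most of its proof of (2) to precisely these cases, using the identifications $B_n^+\simeq O_{n-1}^+$, the embedding $B_2'^+=\widehat{D}_\infty\subset O_2^+$, and --- the one genuinely delicate point --- the exceptional free analogue $B_2'^+\subset H_2^+$ of $B_2'\subset H_2$, which reads $\widehat{D}_\infty\subset O_2^{-1}$ and is what saves $H_2^+$ (and then $H_3^+\supset H_2^+$). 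Your proposal says nothing about these small-rank cases, and the containment argument you give cannot reach them.

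For row (3) your worry is well founded and your preferred resolution does not work as stated: from $H_n^*\subset O_n^*$ one cannot ``restrict'' a group dual subgroup of the \emph{larger} quantum group $O_n^*$ down to the smaller one $H_n^*$; the inclusion goes the wrong way. The paper's fix is to choose the witness inside $H_n^*$ from the start: for $n\geq 3$ it takes $L_n=\mathbb Z_2^{*n}/\langle abc=cba\rangle$, which is non-abelian by \cite{bve} and satisfies $\widehat{L}_n\subset H_n^*\subset O_n^*$, settling both half-liberated cases at once (the generators being of order two and half-commuting is exactly what makes the dual land in $H_n^*$); the case $n=2$ is reduced to the free case via $O_2^*=O_2^+$ and $H_2^*=H_2^+$. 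Your fallback of ``redoing the diagonal analysis for the half-liberated hyperoctahedral relations'' would indeed lead to $L_n$, so the idea is recoverable, but as written the $H_n^*$ case and the small-$n$ free cases are unproved.
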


\begin{proof}
We refer to the papers \cite{bcs}, \cite{bsp} for the definition of easiness, and for the precise construction and interpretation of the above quantum groups.

(1) Any classical group is basic by definition.

(2) The free examples, and the inclusions between them, are as follows:
$$\begin{matrix}
B_n^+&\subset&B_n'^+&\subset&O_n^+\\
\\
\cup&&\cup&&\cup\\
\\
S_n^+&\subset&S_n'^+&\subset&H_n^+
\end{matrix}$$

Let us first look at the case $n=2$. Here, according to \cite{bsp}, the diagram is:
$$\begin{matrix}
\mathbb Z_2&\subset&\widehat{D}_\infty&\subset&O_2^+\\
\\
\cup&&\cup&&\cup\\
\\
\mathbb Z_2&\subset&\mathbb Z_2\times\mathbb Z_2&\subset&O_2^{-1}
\end{matrix}$$

Thus all these quantum groups are basic, except perhaps for $H_2^+=O_2^{-1}$. But this quantum group is basic too, because it is known that the exceptional embedding $B_2'\subset H_2$ has a free analogue $B_2'^+\subset H_2^+$, which reads $\widehat{D}_\infty\subset O_2^{-1}$. At $n=3$ now, the diagram is:
$$\begin{matrix}
O_2^+&\subset&\mathbb Z_2\;\hat{*}\;O_2^+&\subset&O_3^+\\
\\
\cup&&\cup&&\cup\\
\\
S_3&\subset&\mathbb Z_2\times S_3&\subset&H_3^+
\end{matrix}$$

Here we have used the isomorphism $B_n^+\simeq O_{n-1}^+$, cf. \cite{rau}. Now since $O_2^+$ is basic, so are the other 2 quantum groups in the upper row. As for the remaining non-classical quantum group, namely $H_3^+$, this is basic too, because it contains $H_2^+$.

Finally, at $n\geq 4$ we can use the standard embeddings $\widehat{D}_\infty\subset S_4^+\subset S_n^+\subset G_n^+$ in order to conclude that any free quantum group $G_n^+$ contains $\widehat{D}_\infty$, and hence is basic.

(3) At $n=2$ it is known from \cite{bve} that we have $O_2^*=O_2^+$. Thus we have as well $H_2^*=H_2^+$, and since we already know that $O_2^+,H_2^+$ are basic, we are done.

At $n\geq 3$ now, consider the group $L_n=\mathbb Z_2^{*n}/<abc=cba>$, where the relations $abc=cba$ are imposed to the standard generators of $\mathbb Z_2^{*n}$. By \cite{bve} this group is not abelian, and we have inclusions $\widehat{L}_n\subset H_n^*\subset O_n^*$. Thus $O_n^*,H_n^*$ are basic, and we are done.
\end{proof}

It is possible to prove that some other quantum groups from \cite{bcs} are basic as well. So, the following question appears: are there any examples of strange quantum groups?

This is quite a tricky question, and the simplest answer comes from:

\begin{theorem}
The following quantum groups are strange:
\begin{enumerate}
\item The Kac-Paljutkin quantum group \cite{kpa}.

\item Its generalizations constructed by Sekine in \cite{sek}.
\end{enumerate}
\end{theorem}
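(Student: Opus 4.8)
The plan is to verify the two defining properties of strangeness separately: non-classicality of $G$, and classicality (abelianness) of every group dual subgroup $\widehat\Lambda\subset G$. Non-classicality will be immediate in both cases, since the function algebra $C(G)$ contains a genuine matrix block: for the Kac-Paljutkin quantum group one has $C(G)\cong\mathbb C^4\oplus M_2(\mathbb C)$, and Sekine's algebras, of dimension $2k^2$, likewise carry a matrix summand $M_k(\mathbb C)$ with $k\geq 2$, so $C(G)$ is noncommutative. The substantive part is therefore the second property. I would phrase it through the standard duality: a group dual subgroup $\widehat\Lambda\subset G$ is the same as a surjective Hopf $*$-algebra morphism $\pi:C(G)\to C^*(\Lambda)$, and such a subgroup is classical precisely when $C^*(\Lambda)$ is commutative, i.e. when $\Lambda$ is abelian. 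So the goal reduces to showing that any discrete group $\Lambda$ admitting such a surjection from $C(G)$ must be abelian.

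For the Kac-Paljutkin quantum group this should follow from a Lagrange-type argument alone. Since we work with quantum groups of Kac type, the Nichols-Zoeller theorem, applied to the Hopf subalgebra $C(\Lambda)\subset C(\widehat G)$ dual to $\pi$, gives that $|\Lambda|=\dim C^*(\Lambda)$ divides $\dim C(G)=8$. Moreover $|\Lambda|\neq 8$: a surjection between Hopf $*$-algebras of equal finite dimension is an isomorphism, which would force $C(G)$ to be cocommutative, contradicting the known non-cocommutativity of the Kac-Paljutkin quantum group. Hence $|\Lambda|\in\{1,2,4\}$, and every group of order at most $4$ is abelian. This settles part (1).

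For Sekine's quantum groups the same counting yields only $|\Lambda|\mid 2k^2$ with $|\Lambda|\neq 2k^2$, which is not enough, since nonabelian groups of order dividing $2k^2$ certainly exist. Here I would instead appeal to Franz and Skalski's classification of all closed subgroups of the Sekine quantum groups \cite{fsk}. From that list one reads off exactly the cocommutative (group dual) subgroups and checks that each is classical, equivalently that every classical-group quotient of the dual $\widehat G$ is abelian. Since the Kac-Paljutkin quantum group is the $k=2$ member of Sekine's family, this also re-proves part (1), and I would retain the elementary divisibility argument above as a consistency check.

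The main obstacle is precisely the Sekine case: the clean divisibility argument that closes the Kac-Paljutkin case breaks down, so one genuinely needs the explicit subgroup structure from \cite{fsk} rather than a soft counting bound. In carrying this out, the delicate point is to separate the group dual (cocommutative) subgroups from the other proper closed quantum subgroups; the latter, even if non-classical, do not threaten strangeness because they are not of group dual type, and then to confirm that the former all correspond to abelian $\Lambda$.
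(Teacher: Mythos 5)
Your argument is correct, and for part (1) it takes a genuinely different route from the paper. The paper disposes of the Kac--Paljutkin case by invoking Pal's classification \cite{pal} of \emph{all} quantum subgroups of this quantum group and observing that the group dual subgroups appearing in that list are abelian; you replace this by a soft counting argument: a surjection $C(G)\to C^*(\Lambda)$ forces $\Lambda$ to be finite with $|\Lambda|$ dividing $\dim C(G)=8$ (Nichols--Zoeller applied to the dual inclusion), the case $|\Lambda|=8$ is excluded because the Kac--Paljutkin algebra is not cocommutative, and every group of order at most $4$ is abelian. This is more elementary and self-contained than the paper's argument --- it needs only the standard facts that $C(G)\cong\mathbb C^4\oplus M_2(\mathbb C)$ and that $G$ is neither commutative nor cocommutative, rather than a full subgroup classification --- though, as you correctly diagnose, it is special to the order-$8$ situation and does not extend to Sekine's family. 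For part (2) you do exactly what the paper does: fall back on the Franz--Skalski classification \cite{fsk} of the closed quantum subgroups of the Sekine quantum groups and read off that the cocommutative ones are all duals of abelian groups; neither you nor the paper carries out that verification explicitly, so the two treatments are on the same footing there. Two minor remarks: the paper does not even record the (easy) non-classicality half of strangeness, so your explicit mention of the matrix block in $C(G)$ is a small improvement; and your parenthetical claim that the Kac--Paljutkin quantum group is the $k=2$ member of Sekine's family is not needed anywhere in your proof and should be checked against \cite{kpa}, \cite{sek} before being relied upon.
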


\begin{proof}
The quantum groups in the statement are known to provide counterexamples to the ``quantum version'' of a classical result of Kawada and It\^o \cite{kit}, stating that all the idempotent states must come from subgroups. More precisely:

(1) Pal computed in \cite{pal} all the quantum subgroups of the Kac-Paljutkin quantum group, and constructed an idempotent state not coming from them. But his classification can be used as well for our purposes, because all the group dual subgroups found in \cite{pal} are abelian, and hence the Kac-Paljutkin quantum group is strange.

(2) The situation here is very similar, with the classification of all the quantum subgroups, which implies strangeness, done by Franz and Skalski in \cite{fsk}.
\end{proof}

Some other strange examples include the quantum double torus, introduced by Hajac and Masuda in \cite{hma}, for irrational values of the rotation parameter. We will come back a bit later to these examples, after developing some general group dual subgroup theory.

\section{Diagonal subgroups}

We have seen in the previous section that one question of interest is that of classifying the group dual subgroups of a given quantum group $G$. Indeed, once such a classification is available, the question of deciding whether $G$ is basic or not becomes trivial. In fact, this latter question doesn't seem to be much simpler than the classification one.

In this section we develop a number of techniques for dealing with this problem, in the ``matrix'' case. Let us first recall the following definition, due to Wang \cite{wa1}:

\begin{definition}
$C(U_n^+)$ is the universal $C^*$-algebra generated by variables $u_{ij}$ with $i,j=1,\ldots,n$ with the relations making $u=(u_{ij})$ and $u^t=(u_{ji})$ unitary matrices.
\end{definition}

As a first observation, this algebra is a Hopf $C^*$-algebra in the sense of \cite{wo1}, hence in the sense of \cite{wo3} as well, with comultiplication, counit and antipode given by:
\begin{eqnarray*}
\Delta(u_{ij})&=&\sum_ku_{ik}\otimes u_{kj}\\
\varepsilon(u_{ij})&=&\delta_{ij}\\ 
S(u_{ij})&=&u_{ji}^*
\end{eqnarray*}

Observe the similarity with the usual formulae for the matrix multiplication, unit and inversion. In fact, given any compact group of matrices $G\subset U_n$, we have a surjective morphism of Hopf $C^*$-algebras $\pi:C(U_n^+)\to C(G)$ given by $\pi(u_{ij}):g\to g_{ij}$.

We will need the following basic result, due to Woronowicz \cite{wo1}:

\begin{proposition}
Let $\Lambda=<g_1,\ldots,g_n>$ be a discrete group, and set $D=diag(g_i)$.
\begin{enumerate}
\item We have a morphism $\pi:C(U_n^+)\to C^*(\Lambda)$ given by $(id\otimes\pi)u=D$.

\item In fact, for any $U\in U_n$ we have such a morphism, given by $(id\otimes\pi)u=UDU^*$.

\item All the group dual subgroups $\widehat{\Lambda}\subset U_n^+$ appear from morphisms as in (2).
\end{enumerate}
\end{proposition}

\begin{proof}
(1) follows from (2), which follows from the fact that $V=UDU^*$ is unitary, with unitary transpose. As for (3), this follows from the representation theory results in \cite{wo1}. Indeed, an embedding $\widehat{\Lambda}\subset U_n^+$ must come from a surjective morphism $\pi:C(U_n^+)\to C^*(\Lambda)$. Now since the matrix $V=(id\otimes\pi)u$ is a unitary corepresentation of $C^*(\Lambda)$, we can find a unitary $U\in U_n$ such that $D=U^*VU$ is a direct sum of irreducible corepresentations. But these irreducible corepresentations are known to be all 1-dimensional, and corresponding to the elements of $\Lambda$, so we have $D=diag(g_i)$ for certain elements $g_i\in\Lambda$. Moreover, since $\pi$ is surjective we have $\Lambda=<g_1,\ldots,g_n>$, and we are done.
\end{proof}

We will need as well the following basic result, from \cite{bve}:

\begin{proposition}
Let $G\subset U_n^+$ be a closed subgroup. 
\begin{enumerate}
\item The ideal $I=<u_{ij}|i\neq j>$ is a Hopf ideal.

\item The quotient algebra $A=C(G)/I$ is cocommutative.

\item The generators $g_i=u_{ii}$ of the algebra $A$ are group-like.

\item We have $A=C^*(\Gamma_1)$, where $\Gamma_1=<g_1,\ldots,g_n>$.
\end{enumerate}
\end{proposition}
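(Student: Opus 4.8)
The plan is to show that the ideal $I=\langle u_{ij}\mid i\neq j\rangle$ is compatible with all the Hopf algebra structure maps, and that quotienting by it forces the off-diagonal generators to vanish while turning the diagonal ones into group-like elements. I would work throughout with the universal surjection $C(U_n^+)\to C(G)$, so that the formulae for $\Delta,\varepsilon,S$ recorded just before Definition 4.1 descend to $C(G)$ and can be used directly on the classes of the $u_{ij}$.

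For part (1), the key computation is to apply $\Delta$ to an off-diagonal generator. First I would verify that $I$ is a two-sided $*$-ideal, where the $*$-stability uses the unitarity relations (the adjoint of $u_{ij}$ is expressible through the entries of $u^*$, which again lie off the diagonal modulo $I$). To see it is a coideal I compute $\Delta(u_{ij})=\sum_k u_{ik}\otimes u_{kj}$ for $i\neq j$ and argue that every term lies in $I\otimes A+A\otimes I$: in the summand indexed by $k$, either $k\neq i$, so the first tensor factor $u_{ik}\in I$, or $k=i$, in which case $k=i\neq j$ forces the second factor $u_{kj}=u_{ij}\in I$. Hence $\Delta(I)\subset I\otimes A+A\otimes I$. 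Since $\varepsilon(u_{ij})=\delta_{ij}=0$ for $i\neq j$ and $S(u_{ij})=u_{ji}^*\in I$ as well, $I$ is a Hopf ideal.

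Parts (2), (3) and (4) then follow by examining the quotient $A=C(G)/I$. Writing $g_i$ for the class of $u_{ii}$, the quotient is generated by the $g_i$ because all off-diagonal classes are zero. For (3), the comultiplication formula gives $\Delta(u_{ii})=\sum_k u_{ik}\otimes u_{ki}$, and modulo $I$ every term with $k\neq i$ dies, leaving $\Delta(g_i)=g_i\otimes g_i$; combined with $\varepsilon(g_i)=1$ and the unitarity of $u$ restricted to the diagonal (which makes each $g_i$ a unitary, so $S(g_i)=g_i^{-1}$), each $g_i$ is group-like and in particular invertible. This group-likeness immediately yields cocommutativity in (2), since $\Sigma\Delta(g_i)=g_i\otimes g_i=\Delta(g_i)$ on generators and $\Sigma\Delta$, $\Delta$ are both $*$-homomorphisms agreeing on a generating set. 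Finally, by the general structure theory quoted after Definition 1.5, a cocommutative Hopf $C^*$-algebra generated by group-like unitaries is a group $C^*$-algebra, giving $A=C^*(\Gamma_1)$ with $\Gamma_1=\langle g_1,\dots,g_n\rangle$, which is (4).

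The main obstacle I anticipate is the verification in part (1) that $I$ is genuinely a coideal, i.e. the bookkeeping that $\Delta(u_{ij})\in I\otimes A + A\otimes I$ for all $i\neq j$; the case split on whether the summation index equals $i$ is the crux, and one must be careful that it is the \emph{index} $i$ (the row), not $j$, that controls which tensor leg absorbs the term. A secondary subtlety is the passage from the purely algebraic group-like relations to the $C^*$-algebraic identification $A=C^*(\Gamma_1)$: one should check that the $g_i$ are unitaries (so that $\Gamma_1$ is honestly a group and not just a monoid) before invoking the cocommutative classification, but this is supplied by the unitarity of $u$ descending to the quotient.
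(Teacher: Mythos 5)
Your proof is correct and takes essentially the same route as the paper, whose own argument simply records that modulo $I$ the relation $\Delta(u_{ii})=\sum_k u_{ik}\otimes u_{ki}$ collapses to $\Delta(u_{ii})=u_{ii}\otimes u_{ii}$ and refers to \cite{bve} for the remaining details. The coideal bookkeeping for the off-diagonal entries, the unitarity of the $g_i$, and the passage from cocommutativity to $A=C^*(\Gamma_1)$ that you spell out are exactly the steps the paper leaves implicit.
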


\begin{proof}
These assertions are more or less equivalent, and follow from the fact that, when dividing by $I$, the relation $\Delta(u_{ii})=\sum_ku_{ik}\otimes u_{ki}$ becomes $\Delta(u_{ii})=u_{ii}\otimes u_{ii}$. See \cite{bve}.
\end{proof}

We should mention that in the above result we identify as usual the full and reduced versions of our Hopf $C^*$-algebras, so that the equality in (4) means that the full version of $A$ equals the full group algebra $C^*(\Gamma_1)$. For more on this subject, see \cite{wa3}.

We can combine the above two results, in the following way:

\begin{definition}
Let $G=\widehat{\Gamma}$ be a closed subgroup of $U_n^+$. Associated to any unitary matrix $U\in U_n$ is the classical discrete group quotient $\Gamma\to\Gamma_U$ given by
$$C^*(\Gamma_U)=C(G)/<v_{ij}=0,\forall\,i\neq j>$$
where $v=UuU^*$. Also, we write $\Gamma_U=<g_1,\ldots,g_n>$, where $g_i=v_{ii}$.
\end{definition}

Observe the compatibility with Proposition 4.3. Indeed, the discrete group $\Gamma_1$ constructed there coincides with the discrete group $\Gamma_U$ constructed here, at $U=1$.

We can state now our main theoretical observation:

\begin{proposition}
Let $G\subset U_n^+$ be a closed subgroup.
\begin{enumerate}
\item The group dual subgroups $\widehat{\Lambda}\subset G$ come from the quotients of type $\Gamma_U\to\Lambda$.

\item $G$ is strange if and only if it is not classical, and all groups $\Gamma_U$ are abelian.
\end{enumerate}
\end{proposition}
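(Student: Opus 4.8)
The plan is to derive both assertions from the structural results already in place, namely Woronowicz's Proposition 4.2 on group dual subgroups of $U_n^+$, the cocommutative quotient construction of Proposition 4.3, and Definition 4.4 of $\Gamma_U$; the only genuinely new content is to match up the various unitaries correctly and to translate ``classical'' into ``abelian''.

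For (1), I would argue in two directions. First, suppose $\widehat{\Lambda}\subset G$ is a group dual subgroup. Composing with $G\subset U_n^+$ gives an embedding $\widehat{\Lambda}\subset U_n^+$, so by Proposition 4.2(3) it arises from a surjective morphism $\pi:C(U_n^+)\to C^*(\Lambda)$ with $(id\otimes\pi)u=UDU^*$, where $U\in U_n$ and $D=\mathrm{diag}(h_i)$ with $h_i\in\Lambda$. Since the subgroup lies in $G$, this $\pi$ factors through $C(G)$. I would then set $v=U^*uU$, so that $(id\otimes\pi)v=D$ is diagonal; hence $\pi(v_{ij})=0$ for $i\neq j$, and $\pi$ descends to the quotient $C(G)/\langle v_{ij}=0,\ i\neq j\rangle=C^*(\Gamma_{U^*})$ of Definition 4.4. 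The descended map sends each group-like generator $g_i=v_{ii}$ of $\Gamma_{U^*}$ to the group-like $h_i$, and is surjective because $\pi$ is and $\Lambda=\langle h_i\rangle$; being a surjection of cocommutative Hopf algebras carrying group-likes to group-likes, it is induced by a surjective group homomorphism $\Gamma_{U^*}\to\Lambda$, the desired quotient. Since $U^*$ ranges over all of $U_n$ as $U$ does, every group dual subgroup of $G$ indeed comes from some quotient $\Gamma_U\to\Lambda$.

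Conversely, given a quotient $\Gamma_U\to\Lambda$, I would observe that the chain $C(G)\to C^*(\Gamma_U)\to C^*(\Lambda)$ is a composition of surjective Hopf $C^*$-algebra morphisms: the first is legitimate because $v=UuU^*$ is again a fundamental corepresentation generating $C(G)$, so Proposition 4.3 applies verbatim with $v$ in place of $u$, and the second is the given group quotient. This exhibits $\widehat{\Lambda}$ as a group dual subgroup of $G$, finishing (1). For (2), I would use that a group dual $\widehat{\Lambda}$ is classical exactly when $C^*(\Lambda)$ is commutative, i.e. when $\Lambda$ is abelian. By (1) the group dual subgroups of $G$ are precisely the duals of the quotients $\Gamma_U\to\Lambda$. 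If every $\Gamma_U$ is abelian, then so is every such $\Lambda$, whence every group dual subgroup is classical; conversely, taking the trivial quotient $\Lambda=\Gamma_U$ shows each $\Gamma_U$ must itself be abelian once all group dual subgroups are classical. Combining this equivalence with Definition 3.2 gives the stated criterion for strangeness.

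The main obstacle is the bookkeeping in the forward direction of (1): one must keep straight that Proposition 4.2 diagonalizes via $U^*uU$ whereas Definition 4.4 conjugates via $UuU^*$, so the relevant group is $\Gamma_{U^*}$ and not $\Gamma_U$ (harmless, since $U^*$ sweeps out $U_n$ as well), and one must check that the induced algebra surjection genuinely descends from a group quotient rather than a mere algebra map. This last point is exactly where the group-like property of the generators $v_{ii}$, supplied by Proposition 4.3, is indispensable.
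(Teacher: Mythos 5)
Your proposal is correct and follows essentially the same route as the paper: the forward direction via Proposition 4.2(3) and factorization through the diagonal quotient, the converse via the chain $C(G)\to C^*(\Gamma_U)\to C^*(\Lambda)$, and (2) by noting that quotients of abelian groups are abelian. Your explicit handling of the $U$ versus $U^*$ bookkeeping is a minor refinement that the paper glosses over (it simply says ``with this choice of $U$''), but as you note it is harmless since $U^*$ ranges over all of $U_n$.
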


\begin{proof}
(1) We have by definition an embedding $\widehat{\Gamma}_U\subset G$ for any $U\in U_n$, so if we take a quotient group $\Gamma_U\to\Lambda$ then we will have embeddings $\widehat{\Lambda}\subset \widehat{\Gamma}_U\subset G$. 

Conversely, assume that we have a group dual subgroup $\widehat{\Lambda}\subset G$. Thus we have embeddings $\widehat{\Lambda}\subset G\subset U_n^+$, and Proposition 4.2 tells us, the corresponding surjection $\varphi:C(U_n^+)\to C^*(\Lambda)$ must be of the form $(id\otimes\varphi)u=UDU^*$, where $D=diag(h_1,\ldots,h_n)$ is a diagonal matrix formed by a family of generators of $\Lambda$, and $U\in U_n$. With this choice of $U\in U_n$, we have a surjective map $\Gamma_U\to\Lambda$ given by $g_i\to h_i$, and we are done.

(2) This follows from (1), because if a group is abelian, then so are all its quotients.
\end{proof}

As a first application, consider the quantum double torus algebra $Q=C(\mathbb T^2)\oplus A_{2 \theta}$, constructed by Hajac and Masuda in \cite{hma}. If we denote by $A,D$ the standard generators of $C(\mathbb T^2)$ and by $B,C$ the standard generators of $ A_{2 \theta}$, then the comultiplication of $Q$ is by definition the one making $V=(^A_B{\ }^C_D)$ a corepresentation. See \cite{hma}.

\begin{theorem}
The quantum double torus is strange for $\theta\notin 2\pi\mathbb Z$.
\end{theorem}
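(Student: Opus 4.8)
The plan is to invoke Proposition 4.5(2): since $Q$ sits inside $U_2^+$ via the fundamental corepresentation $V=\begin{pmatrix}A&C\\B&D\end{pmatrix}$, it is strange as soon as (a) $Q$ is non-classical and (b) every associated group $\Gamma_U$ with $U\in U_2$ is abelian. Part (a) is immediate: for $\theta\notin 2\pi\mathbb Z$ the rotation algebra $A_{2\theta}$ is noncommutative, hence so is $Q=C(\mathbb T^2)\oplus A_{2\theta}$. For (b), each $\Gamma_U=\langle g_1,g_2\rangle$ is generated by the two diagonal entries $g_i=v_{ii}$ of $v=UVU^*$, so it is abelian precisely when $g_1g_2=g_2g_1$. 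The whole problem thus reduces to verifying a single commutation relation, uniformly in $U$.

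To set this up I would first extract from the structure of $Q$ that $A,D$ generate the commutative summand $C(\mathbb T^2)$ while $B,C$ generate $A_{2\theta}$, with all mixed products vanishing, so that $V$ is diagonal on the torus sheet and anti-diagonal on the $A_{2\theta}$ sheet. Writing $U=\begin{pmatrix}a&b\\c&d\end{pmatrix}$, I expand $g_1=v_{11}$ and $g_2=v_{22}$. The crucial observation is that, because $A$ and $D$ commute inside $C(\mathbb T^2)$, the $C(\mathbb T^2)$-component of $[g_1,g_2]$ vanishes identically; the commutator therefore lives entirely in $A_{2\theta}$, and a direct expansion gives
$$[g_1,g_2]=(s-\bar s)\,(w_2w_1-w_1w_2),\qquad s=\bar a\,b\,c\,\bar d,$$
where $w_1,w_2\in A_{2\theta}$ are the generators appearing as the $A_{2\theta}$-parts of $C,B$. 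Thus $[g_1,g_2]$ is a scalar multiple of the rotation commutator $w_2w_1-w_1w_2$, which is nonzero in $A_{2\theta}$ exactly when $\theta\notin 2\pi\mathbb Z$; the only obstruction to abelianness is the single scalar $s-\bar s$.

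The key is then a dichotomy. Imposing the defining relations $v_{12}=v_{21}=0$ of $\Gamma_U$ and projecting onto the $A_{2\theta}$-summand yields a linear system $M\binom{\bar w_1}{\bar w_2}=0$ for the images of $w_1,w_2$, with $M=\begin{pmatrix}a\bar d&b\bar c\\c\bar b&d\bar a\end{pmatrix}$. Unitarity of $U$ forces $|a|=|d|$ and $|b|=|c|$, whence $\det M=|a|^2-|b|^2$. If $\det M\neq 0$ the system has only the trivial solution, so the images of $w_1,w_2$, and hence the central projection onto $A_{2\theta}$, die in $C^*(\Gamma_U)$, and $[g_1,g_2]$ is killed in the quotient. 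If instead $\det M=0$, i.e. $|a|=|b|=1/\sqrt2$, then the orthogonality relation of $U$ pins the phases so that $s=-\tfrac14$ is real, and $s-\bar s=0$ makes $[g_1,g_2]$ vanish already in $Q$. In either branch $g_1g_2=g_2g_1$, so $\Gamma_U$ is abelian and Proposition 4.5(2) applies.

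The heart of the argument, and the step I expect to be delicate, is the exact coincidence of the two vanishing mechanisms: the noncommutativity of $A_{2\theta}$ can fail to be eliminated by the off-diagonal relations only on the degenerate locus $\det M=0$, and it is precisely there that $s-\bar s$ is forced to vanish by unitarity. Checking that $s=-\tfrac14$ on the \emph{whole} degenerate locus (not merely at isolated points) is where the orthogonality constraints of $U$ must be handled with care. A minor but necessary prerequisite is to first record from Hajac--Masuda the precise relations of $Q$, in particular that $A,D$ lie in the classical summand and $B,C$ in $A_{2\theta}$ with all cross terms zero, since the entire reduction hinges on the commutator landing in the noncommutative sheet.
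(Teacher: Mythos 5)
Your proof is correct, and it reaches the same reduction as the paper --- via Proposition 4.5(2), one must check that $Q$ is non-classical and that every $\Gamma_U$ is abelian --- but the computational core is genuinely different. The paper manipulates the two off-diagonal relations of $UVU^*$ to deduce that $B$ and $C$ become proportional in every quotient $C^*(\Gamma_U)$, and then invokes $BC=e^{i\theta}CB$ with $\theta\notin 2\pi\mathbb Z$ to force $B=C=0$, so that each quotient is generated by the commuting images of $A,D$. You instead compute the commutator $[v_{11},v_{22}]$ directly, observe that it lives entirely in the $A_{2\theta}$ summand with scalar coefficient $s-\bar s$ where $s=\bar a b c\bar d$, and split according to whether the linear system coming from $v_{12}=v_{21}=0$ kills $B,C$ or not. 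All your intermediate claims check out: the matrix $M$ and the identity $\det M=|a|^2-|b|^2$ are correct, and on the degenerate locus the orthogonality relation $a\bar c+b\bar d=0$ indeed gives $s=-|a|^2|c|^2=-\tfrac14$. In fact that same relation shows $s=-|a|^2|c|^2$ is real for \emph{every} $U\in U_2$, so $s-\bar s=0$ identically and $v_{11}$, $v_{22}$ already commute in $Q$ itself; your dichotomy is therefore logically sound but redundant, and the ``delicate'' step you flag is automatic. This observation also highlights a structural difference between the two arguments: in your route the hypothesis $\theta\notin 2\pi\mathbb Z$ enters only to guarantee non-classicality of $Q$, whereas the paper uses it a second time, inside the abelianness argument, to kill $B$ and $C$. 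The paper's proof yields the stronger conclusion that each $\Gamma_U$ is a quotient of $\mathbb Z^2$ (the noncommutative sheet dies entirely), while yours yields the cleaner statement that the diagonal entries of $UVU^*$ commute for purely unitary reasons.
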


\begin{proof}
This follows by computing the diagonal subgroups, and by using Proposition 4.5. Consider indeed an arbitrary matrix $U\in U_2$. With $d=\det U$, we can write:
$$U=d\begin{pmatrix}s&t\\ -\bar{t}&\bar{s}\end{pmatrix}$$

Here $s,t$ are certain complex numbers satisfying $|s|^2+|t|^2=1$. We have:
$$UVU^*=\begin{pmatrix}
s\bar{s}A+\bar{s}tB+s\bar{t}C+t\bar{t}D&-stA-t^2B+s^2C+stD\\
-\bar{s}\bar{t}A+\bar{s}^2B-\bar{t}^2C+\bar{s}\bar{t}D&t\bar{t}A-s\bar{t}B-s\bar{t}C+s\bar{s}D
\end{pmatrix}$$

We know that $C^*(\Gamma_U)$ is the quotient of $Q$ by the relations making vanish the off-diagonal entries of $UVU^*$. Now, according to the above formula, these relations are:
$$st(A-D)=-t^2B+s^2C$$
$$\bar{s}\bar{t}(A-D)=\bar{s}^2B-\bar{t}^2C$$

By multiplying the first relation by $\bar{s}\bar{t}$ and the second one by $st$ we obtain:
$$\bar{s}\bar{t}(-t^2B+s^2C)=st(\bar{s}^2B-\bar{t}^2C)$$

Thus we have $s\bar{t}(s\bar{s}+t\bar{t})C=\bar{s}t(s\bar{s}+t\bar{t})B$, and by dividing by $s\bar{s}+t\bar{t}=1$ we obtain $s\bar{t}C=\bar{s}tB$. Thus, in the case $s,t\neq 0$, the elements $B,C$ are proportional.

On the other hand, we know that $BC=e^{i\theta}CB$. Thus in the case $s,t\neq 0$ we obtain $B=C=0$, so the quotient is generated by $A,D$, which commute, and we are done.

Finally, in the case $s=0$ or $t=0$ the above two relations defining $C^*(\Gamma_U)$ simply become $B=C=0$, so the same argument applies, and we are done.
\end{proof}

\section{Explicit computations}

In this section we present some explicit computations of the family of discrete groups $F=\{\Gamma_U|U\in U_n\}$ associated to a compact quantum group $G\subset U_n^+$. Our main result here will concern the case where $G=S_n^+$ is Wang's quantum permutation group \cite{wa1}.

We have first the following basic result, in the group dual case:

\begin{proposition}
Let $\Gamma=<g_1,\ldots,g_n>$ be a discrete group, and regard $G=\widehat{\Gamma}$ as a closed subgroup of $U_n^+$, by using the biunitary matrix $D=diag(g_i)$. Then:
$$\Gamma_U=\Gamma/<g_s=g_t|\exists j,\, U_{jt}\neq 0,U_{js}\neq 0>$$
\end{proposition}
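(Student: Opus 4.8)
The plan is to compute the off-diagonal entries of $v=UuU^*$ explicitly and then prove the claimed equality by exhibiting each side as a quotient of the other. Since $u=D=\mathrm{diag}(g_i)$, the entries of $v=UuU^*$ are
$$v_{ij}=\sum_k U_{ik}\overline{U_{jk}}\,g_k,$$
and in particular the diagonal entries are $v_{ii}=\sum_k|U_{ik}|^2 g_k$, where the coefficients are nonnegative and satisfy $\sum_k|U_{ik}|^2=1$, being the squared norm of row $i$ of the unitary $U$.

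For the inclusion asserting that $\Gamma_U$ is a quotient of the right-hand side, I would show that the defining relations of the right-hand side already hold in $\Gamma_U$. By Definition 4.4 (via Proposition 4.3 applied to the biunitary $v$) the generators $g_i=v_{ii}$ are group-like in $C^*(\Gamma_U)$, that is, honest group elements. Thus $v_{ii}=\sum_k|U_{ik}|^2 g_k$ expresses a single group element as a combination of the group elements $g_k$ with nonnegative coefficients summing to $1$. Since distinct group elements are linearly independent in $C^*(\Gamma_U)$, every $g_k$ with $U_{ik}\neq 0$ must coincide with $v_{ii}$; in particular $g_s=g_t$ whenever some row $j$ satisfies $U_{js}\neq 0$ and $U_{jt}\neq 0$. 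These are exactly the relations imposed on the right-hand side, so the quotient map $\Gamma\to\Gamma_U$ of Definition 4.4 factors through $\Gamma/N$, where $N$ is the normal subgroup generated by those relations.

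For the reverse inclusion I would pass to $\bar\Gamma=\Gamma/N$, with quotient map $q:\Gamma\to\bar\Gamma$, and verify that the off-diagonal relations defining $\Gamma_U$ hold in $C^*(\bar\Gamma)$, so that $C^*(\Gamma)\to C^*(\bar\Gamma)$ factors through $C^*(\Gamma_U)$. The key structural remark is that $N$ forces the support of every row of $U$ to lie in a single equivalence class of columns: if $s,t$ both lie in the support of row $j$, they are identified. Hence for each row $i$ there is a single class $c(i)$ with $U_{ik}=0$ for $k\notin c(i)$, and $q(g_k)$ is constant on each class. Computing $\bar v_{ij}=\sum_k U_{ik}\overline{U_{jk}}\,q(g_k)$ for $i\neq j$, only indices $k\in c(i)\cap c(j)$ can contribute, and on these $q(g_k)$ equals a fixed element $h$, so $\bar v_{ij}=h\sum_k U_{ik}\overline{U_{jk}}=h\,\delta_{ij}=0$ by unitarity of $U$. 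Thus $\bar v$ is diagonal, giving a surjection $\Gamma_U\to\bar\Gamma$; as both maps are the identity on the generators $g_i$, they are mutually inverse and $\Gamma_U=\Gamma/N$.

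I expect the main obstacle to be this reverse inclusion. The necessity of the relations is the clean conceptual point, resting only on linear independence of group elements in a group algebra; verifying sufficiency requires the combinatorial observation that each row-support sits inside a single column-class, and then exploiting $UU^*=1$ to collapse the resulting sum. Some care is also needed to phrase both implications as quotients of the fixed group $\Gamma$ acting identically on generators, so that the two surjections upgrade to an isomorphism rather than merely mutual surjectivity.
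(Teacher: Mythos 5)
Your proof is correct, and its overall architecture --- computing $v_{ij}=\sum_kU_{ik}\bar{U}_{jk}g_k$ and exhibiting each group as a quotient of the other --- matches the paper's; in particular your converse direction (each row support sits in a single column-class, so the off-diagonal sums collapse to $h\sum_kU_{ik}\bar{U}_{jk}=0$ by unitarity) is essentially identical to the paper's argument. Where you genuinely diverge is the forward direction. The paper stays purely computational: it multiplies $(UDU^*)_{ij}$ by $\bar{U}_{it}$, sums over $i$, and uses column-orthonormality to obtain $\bar{U}_{jt}\sum_k|U_{jk}|^2g_k=\bar{U}_{jt}g_t$ in the quotient, so that all $g_t$ with $U_{jt}\neq 0$ equal the common element $\sum_k|U_{jk}|^2g_k$. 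You instead note that the group-like element $v_{ii}$ is a convex combination $\sum_k|U_{ik}|^2g_k$ of group elements and invoke linear independence of distinct group elements in $C^*(\Gamma_U)$ together with positivity of the coefficients. Both are valid; yours is arguably more conceptual and shorter, but it leans on two extra inputs --- that $v_{ii}$ is group-like in the quotient (Proposition 4.3 applied to the biunitary $v=UDU^*$, which in turn uses Proposition 4.2(2) to know that $v$ and $v^t$ are unitary) and the injectivity of $\mathbb{C}[\Gamma_U]\to C^*(\Gamma_U)$ --- whereas the paper's manipulation uses only the unitarity of $U$ and the vanishing of the off-diagonal entries. Your final step, upgrading the two surjections to an isomorphism by checking both are the identity on the generators, is the right way to close the argument and is what the paper does implicitly.
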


\begin{proof}
We know that $C^*(\Gamma_U)$ is the quotient of $C^*(\Gamma)$ by the relations making vanish the off-diagonal entries of the matrix $UDU^*$. But this matrix is:
$$(UDU^*)_{ij}=\sum_kU_{ik}\bar{U}_{jk}g_k$$

Let now $t\in\{1,\ldots,n\}$. By multiplying by $\bar{U}_{it}$ and summing over $i$ we get:
\begin{eqnarray*}
\sum_i\bar{U}_{it}(UDU^*)_{ij}
&=&\sum_i\sum_k\bar{U}_{it}U_{ik}\bar{U}_{jk}g_k\\
&=&\sum_k\bar{U}_{jk}g_k\sum_i\bar{U}_{it}U_{ik}\\
&=&\bar{U}_{jt}g_t
\end{eqnarray*}

Now assume that we are in the quotient algebra $C^*(\Gamma_U)$. Since the off-diagonal entries of $UDU^*$ vanish here, the above formula becomes $\bar{U}_{jt}(UDU^*)_{jj}=\bar{U}_{jt}g_t$, so we get:
$$\bar{U}_{jt}\sum_k|U_{jk}|^2g_k=\bar{U}_{jt}g_t$$

In particular, for any $j,t$ such that $\bar{U}_{jt}\neq 0$, we must have:
$$g_t=\sum_k|U_{jk}|^2g_k$$

Now fix an index $j\in\{1,\ldots,n\}$. Since the expression on the right is independent on $t$, we conclude that the elements $g_t$, with $t\in\{1,\ldots,n\}$ having the property that $U_{jt}\neq 0$, are all equal. So, in other words, $\Gamma_U$ is a quotient of the group in the statement.

In order to finish now, consider the group in the statement. We must prove that the off-diagonal coefficients of $UDU^*$ vanish. So, let us look at these coefficients:
$$(UDU^*)_{ij}=\sum_kU_{ik}\bar{U}_{jk}g_k$$

In this sum $k$ ranges over the set $S=\{1,\ldots,n\}$, but we can of course restrict the attention to the subset $S'$ of indices having the property $U_{ik}U_{jk}\neq 0$.  But for these latter indices the elements $g_k$ are all equal, say to an element $g\in\Gamma_U$, and we obtain:
$$(UDU^*)_{ij}=\left(\sum_{k\in S'}U_{ik}\bar{U}_{jk}\right)g=\left(\sum_{k\in S}U_{ik}\bar{U}_{jk}\right)g=\delta_{ij}g_i$$

This finishes the proof.
\end{proof}

Observe the similarity between the above statement and proof and the group dual computations in \cite{bss}. In fact, all these considerations seem to belong to a wider circle of ideas, including as well the computation of the groups $\Gamma_U$ for the quantum permutation group $G=S_n^+$. Indeed, let us first recall the following key result, due to Bichon \cite{bi2}:

\begin{proposition}
Let $\mathbb Z_{n_1}*\ldots*\mathbb Z_{n_k}\to\Lambda$ be a quotient group. Then $\widehat{\Lambda}\curvearrowright\mathbb C^n$, where $n=n_1+\ldots+n_k$, and any group dual coaction on $\mathbb C^n$ appears in this way.
\end{proposition}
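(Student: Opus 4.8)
The plan is to pass to a purely algebraic reformulation, dispatch the easy implication by an explicit circulant construction, and then reconstruct the free product from the internal structure of the magic unitary. First I would record the basic dictionary. A group dual coaction $\widehat\Lambda\curvearrowright\mathbb C^n$ is nothing but a magic unitary $p=(p_{ij})$ with entries in $C^*(\Lambda)$ generating it, via $\alpha(e_j)=\sum_i e_i\otimes p_{ij}$ on the minimal projections $e_1,\dots,e_n$ of $\mathbb C^n$. Expanding each entry along the canonical grading $C^*(\Lambda)=\bigoplus_{g\in\Lambda}\mathbb C g$ and using $\Delta(g)=g\otimes g$, such a coaction is in turn the same datum as a grading of the algebra $\mathbb C^n=C(X)$, $X=\{1,\dots,n\}$, by the group $\Lambda$: writing $\alpha(f)=\sum_g\alpha_g(f)\otimes g$, the coaction axioms say exactly that the $\alpha_g$ are orthogonal idempotents summing to the identity, so $C(X)=\bigoplus_g C(X)_g$ with $C(X)_gC(X)_h\subseteq C(X)_{gh}$ and $C(X)_g^*=C(X)_{g^{-1}}$. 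I would work mostly on this graded side.

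For the implication from a quotient to a coaction I would build the coaction block by block. Given $\mathbb Z_{n_1}*\dots*\mathbb Z_{n_k}\to\Lambda$ sending the $i$-th standard generator to $g_i\in\Lambda$ with $g_i^{n_i}=1$, the element $g_i$ is a unitary whose order divides $n_i$, so its spectral projections $q^{(i)}_a=\tfrac1{n_i}\sum_{c=0}^{n_i-1}\omega_i^{ac}g_i^c$ (with $\omega_i=e^{2\pi i/n_i}$) are orthogonal projections summing to $1$, and the circulant $p^{(i)}_{ab}=q^{(i)}_{a-b}$ is an $n_i\times n_i$ magic unitary over $C^*(\Lambda)$. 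The block-diagonal sum of the $p^{(i)}$ is then an $n\times n$ magic unitary, hence a coaction $\widehat\Lambda\curvearrowright\mathbb C^n$; since the $g_i$ generate $\Lambda$, the entries generate $C^*(\Lambda)$ and the coaction is faithful.

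The substance is the converse. Starting from a grading of $C(X)$, the degree-$e$ part $C(X)_e$ is a unital $*$-subalgebra of $C(X)$, hence of the form $C(X/\!\sim)$ for a partition $X=B_1\sqcup\dots\sqcup B_k$ (equivalently the classes of the equivalence relation generated by $p_{ij}\neq 0$, whose transitivity comes from $\Delta(p_{ik})=\sum_j p_{ij}\otimes p_{jk}$). On a single block the grading is connected, $\dim C(B_r)_e=1$, and here the key trick is that for a nonzero homogeneous $f\in C(B_r)_g$ one has $ff^*\in C(B_r)_e=\mathbb C\,1_{B_r}$, so $|f|$ is constant on $B_r$. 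This forces $\dim C(B_r)_g\le 1$, makes $S_r=\{g:C(B_r)_g\neq 0\}$ a finite subgroup with $|S_r|=|B_r|$, and — because $C(B_r)$ is commutative while products of homogeneous elements stay homogeneous and nonzero — forces $S_r$ to be abelian, with $B_r\cong\widehat{S_r}$ carrying the regular representation. Thus $\Lambda=\langle S_1,\dots,S_k\rangle$ is generated by finite abelian subgroups of total order $\sum_r|S_r|=n$.

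It then remains to repackage this as a single free product of cyclic groups whose orders sum to $n$. For this I would invoke the group-theoretic lemma that a finite abelian group $S$ is a quotient of a free product $\mathbb Z_{m_1}*\dots*\mathbb Z_{m_j}$ with $\sum_l m_l=|S|$: one takes a minimal generating system realizing the invariant factors and pads it with redundant generators of prime order until the orders sum to $|S|$, a short congruence count showing this padding is always achievable. Applying this to each $S_r$ and freely composing the resulting surjections with the canonical map $S_1*\dots*S_k\to\Lambda$ produces a quotient $\mathbb Z_{n_1}*\dots*\mathbb Z_{n_K}\to\Lambda$ with $\sum_i n_i=\sum_r|S_r|=n$, as desired. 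I expect the main obstacle to be the structural step on the blocks, namely proving through the constant-modulus argument that each connected block is exactly the regular representation of a finite abelian group; the accompanying padding lemma is elementary but is precisely what is needed to land on the size $n$ on the nose.
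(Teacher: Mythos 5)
Your proof is correct, and its overall architecture is the same as the paper's: the forward implication is obtained by assembling a block-diagonal magic unitary from the cyclic generators (your circulant of spectral projections is exactly the canonical coaction $\mathbb Z_{n_i}\curvearrowright\mathbb C^{n_i}$ of the paper, written in the Fourier basis), and the converse by splitting $\mathbb C^n$ along the fixed point algebra into ergodic blocks. The difference lies in how the ergodic blocks are handled, and there your version is not merely more detailed but more accurate. The paper disposes of the blocks with the single assertion that a faithful ergodic group dual coaction on $\mathbb C^r$ comes from a quotient $\mathbb Z_r\to\Lambda$, citing \cite{bi2}; as literally stated this is too strong, since the regular coaction of $\widehat{\mathbb Z_2\times\mathbb Z_2}\cong\mathbb Z_2\times\mathbb Z_2$ on $\mathbb C^4$ is faithful and ergodic while $\mathbb Z_2\times\mathbb Z_2$ is not a quotient of $\mathbb Z_4$. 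Your constant-modulus argument correctly identifies each ergodic block as the regular representation of a finite abelian group $S_r$ with $|S_r|=|B_r|$, and your padding lemma --- realizing each $S_r$ as a quotient of a free product of cyclic groups whose orders sum to $|S_r|$, which is trivial if factors $\mathbb Z_1$ are allowed and otherwise guaranteed because $\prod m_l-\sum m_l\equiv 0\pmod{m_1}$ rules out a deficit of $1$ --- is precisely the extra step needed to land on $\sum n_i=n$. One caveat concerning the statement itself, which affects both arguments equally: the conclusion can only be that the group $\Lambda$ arises as such a quotient, not that the given coaction is conjugate to the block-diagonal model, since the ergodic Klein-group example above is not conjugate to any coaction with a nontrivial block decomposition; so ``appears in this way'' must be read at the level of groups, as you implicitly do.
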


\begin{proof}
First, by taking the dual free product of the canonical coactions $\mathbb Z_{n_i}\curvearrowright\mathbb C^{n_i}$, coming from the usual group embeddings $\mathbb Z_{n_i}\subset S_{n_i}$, we obtain a coaction as follows:
$$\mathbb Z_{n_1}\;\hat{*}\;\ldots\;\hat{*}\;\mathbb Z_{n_k}\curvearrowright\mathbb C^{n_1}\oplus\ldots\oplus\mathbb C^{n_k}$$

Thus with $\Gamma=\mathbb Z_{n_1}*\ldots*\mathbb Z_{n_k}$ we have a coaction $\widehat{\Gamma}\curvearrowright\mathbb C^n$, and it follows that for any quotient group $\Gamma\to\Lambda$ we have a coaction $\widehat{\Lambda}\curvearrowright \mathbb C^n$ as in the statement.

Conversely, assume $\widehat{\Lambda}\curvearrowright\mathbb C^n$. The fixed point algebra of this coaction must be of the form $A=\mathbb C^{n_1}\oplus\ldots\oplus\mathbb C^{n_k}$, with $n=n_1+\ldots+n_k$. Now since any faithful ergodic coaction $\widehat{\Lambda}\curvearrowright\mathbb C^r$ must come from a quotient group $\mathbb Z_r\to\Lambda$, this gives the result. See \cite{bi2}.
\end{proof}

In terms of diagonal subgroups now, we have the following result:

\begin{theorem} 
For a quantum permutation group $G=S_n^+$, the discrete group $\Gamma_U$ is generated by elements $g_1,\ldots,g_n$ with the relations
$$\begin{cases}
g_i=1&{\rm if}\ c_i\neq 0\\
g_ig_j=1&{\rm if}\ c_{ij}\neq 0\\ 
g_ig_j=g_k&{\rm if}\ d_{ijk}\neq 0
\end{cases}$$
where $c_i=\sum_lU_{il}$, $c_{ij}=\sum_lU_{il}U_{jl}$, $d_{ijk}=\sum_l\bar{U}_{il}\bar{U}_{jl}U_{kl}$.
\end{theorem}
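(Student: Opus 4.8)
We are looking at $G = S_n^+$, Wang's quantum permutation group, realized as a closed subgroup of $U_n^+$. Its standard fundamental corepresentation $u = (u_{ij})$ is a \emph{magic unitary}: all entries $u_{ij}$ are projections ($u_{ij}^2 = u_{ij} = u_{ij}^*$), and the rows and columns each sum to $1$. By Definition 4.4, $C^*(\Gamma_U)$ is the quotient of $C(S_n^+)$ by the relations $v_{ij} = 0$ for $i \neq j$, where $v = UuU^*$, and $g_i = v_{ii}$. So the plan is to take each of the three claimed defining relations of $\Gamma_U$ and derive it as a consequence of forcing the off-diagonal entries of $v$ to vanish, while exploiting the magic-unitary structure.

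**The main computation.** First I would write everything in terms of $u$. We have $v_{ij} = \sum_{k,l} U_{ik} u_{kl} \bar{U}_{jl}$. The key trick, exactly as in the proof of Proposition 5.1, is to recover individual quantities by summing against rows or columns of $U$ (using unitarity of $U$ to collapse sums). Since in $C^*(\Gamma_U)$ the off-diagonal $v_{ij}$ vanish and $v_{ii} = g_i$, one extracts from the magic-unitary relations $\sum_k u_{kl} = 1$ and $\sum_l u_{kl} = 1$ the information that the $g_i$ are group-like (this is Proposition 4.3(3)), hence genuine group elements. For the three relations I would proceed as follows. The relation $g_i g_j = g_k$ when $d_{ijk} \neq 0$ comes from the comultiplication structure: in $C(S_n^+)$ one has $\Delta(u_{kl}) = \sum_m u_{km} \otimes u_{ml}$, and the ``product'' structure on diagonal entries reflects how $u_{il}\bar u_{jl}$ couples under the magic relations $u_{kl}u_{k'l} = \delta_{kk'}u_{kl}$. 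The coefficient $d_{ijk} = \sum_l \bar U_{il}\bar U_{jl} U_{kl}$ is precisely the amplitude with which $g_k$ appears in the product $g_i g_j$, so $d_{ijk}\neq 0$ forces $g_i g_j = g_k$. The relation $g_i g_j = 1$ when $c_{ij} = \sum_l U_{il}U_{jl} \neq 0$ is the special case recording when the product lands on the identity, and $g_i = 1$ when $c_i = \sum_l U_{il} \neq 0$ records when $g_i$ itself is trivial (this uses $\sum_l u_{il} = 1$, i.e. the row-sum relation, to produce the unit).

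**Both directions.** This is an ``if and only if'' description of a presentation, so I would prove it in two directions. For the forward direction I would show that each listed relation genuinely holds in $\Gamma_U$: impose $v_{ij} = 0$, multiply by appropriate entries of $U$ and $\bar U$, sum, and use the magic-unitary identities together with unitarity of $U$ to read off $g_i = 1$, $g_ig_j = 1$, or $g_ig_j = g_k$ whenever the corresponding coefficient is nonzero. For the converse I would let $\Gamma'$ be the abstract group defined by exactly those relations and construct a surjection $C^*(\Gamma') \to C^*(\Gamma_U)$ by checking that setting $u_{kl}$ in terms of the group-like $g$'s (via $v = UuU^*$ inverted, $u = U^*vU$) is compatible with the magic-unitary relations — i.e. that the candidate projections $u_{kl} = \sum_{i,j}\bar U_{ik} g_i \cdots$ really define a magic unitary in $C^*(\Gamma')$, so that no further relations are needed. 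The cleanest route for the converse is to exhibit a concrete magic unitary with entries in $C^*(\Gamma')$ whose conjugate $v = UuU^*$ is diagonal with entries $g_i$, which by universality of $C(S_n^+)$ yields the reverse quotient map.

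**The main obstacle.** The delicate point will be the converse, specifically verifying that the three relations are \emph{exactly} sufficient — that the natural candidate matrix $u = U^* D U$ (with $D = \mathrm{diag}(g_i)$, suitably reinterpreted) is a legitimate magic unitary in $C^*(\Gamma')$, with each entry a self-adjoint projection and with correct row/column sums. Entries of $u$ will be linear combinations $\sum_i \bar U_{ik} U_{il} g_i$-type expressions, and the projection condition $u_{kl}^2 = u_{kl}$ does not hold for arbitrary coefficients; it holds precisely because the imposed relations $g_ig_j \in \{1, g_k\}$ (governed by $c_i, c_{ij}, d_{ijk}$) force the required cancellations. Checking idempotency and self-adjointness of these entries, and that the off-diagonal entries of $UuU^*$ vanish, is where the combinatorial bookkeeping with $U$ concentrates; I expect this to be the heart of the argument, and the place where the specific forms of $c_i$, $c_{ij}$, $d_{ijk}$ are pinned down rather than merely sufficient.
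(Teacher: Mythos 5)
Your two-quotient skeleton is exactly the paper's: first show that $\Gamma_U$ is a quotient of the abstractly presented group $\tilde\Gamma_U$, then build a magic unitary (equivalently, a coaction on $\mathbb C^n$) over $C^*(\tilde\Gamma_U)$ whose conjugate by $U$ is $\mathrm{diag}(g_i)$ and invoke universality of $C(S_n^+)$ --- so the converse half of your plan is precisely what the paper does. The real difference lies in how the coefficients are identified, and here you have a misattribution worth correcting: $c_i$, $c_{ij}$, $d_{ijk}$ have nothing to do with the comultiplication $\Delta(u_{kl})=\sum_m u_{km}\otimes u_{ml}$ of $C(S_n^+)$; they are the structure constants of the \emph{algebra} $C(X)=\mathbb C^n$ in the rotated basis $\varphi_i=\sum_l\bar U_{il}\delta_l$, namely $1=\sum_i c_i\varphi_i$, $\varphi_i^*=\sum_j c_{ij}\varphi_j$ and $\varphi_i\varphi_j=\sum_k d_{ijk}\varphi_k$. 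Once this is observed, both directions collapse: the canonical coaction, pushed to $C^*(\Gamma_U)$, diagonalizes as $\beta(\varphi_i)=\varphi_i\otimes g_i$, and applying the unital $*$-homomorphism $\beta$ to the three families of universal relations and using linear independence of the $\varphi_i$ yields $c_i(g_i-1)=0$, $c_{ij}(g_ig_j-1)=0$ and $d_{ijk}(g_ig_j-g_k)=0$ with no matrix gymnastics; conversely, $\varphi_i\mapsto\varphi_i\otimes\tilde g_i$ preserves exactly those relations, hence extends to a coaction of $C^*(\tilde\Gamma_U)$ on $C(X)$, which disposes of what you flag as ``the delicate point'' without any entrywise verification that $U^*DU$ is a magic unitary. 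Your raw-matrix route (impose $v_{ij}=0$, multiply by entries of $U$ and $\bar U$, sum, use the magic relations) can be pushed through --- it is the same computation unwound --- but it is substantially heavier, and without recognizing $d_{ijk}$ as the structure constant of the product $\varphi_i\varphi_j$ you would be forced to rediscover that identity inside the bookkeeping; likewise, the group-likeness of the $g_i$ comes from Proposition 4.3 applied to $v=UuU^*$, not from the row and column sums of the magic unitary.
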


\begin{proof} 
Fix $U\in U_n$, and write $w=UvU^*$, where $v$ is the fundamental representation of $S_n^+$. Let $X$ be an $n$-element set, and $\alpha$ be the coaction of $C(S_n^+)$ on $C(X)$. Write:
$$\varphi_i=\sum_l\bar{U}_{il}\delta_l\in C(X)$$

Also, let $g_i=(UvU^*)_{ii}\in C^*(\Gamma_U)$. If $\beta$ is the restriction of $\alpha$ to $C^*(\Gamma_U)$, then:
$$\beta(\varphi_i)=\varphi_i\otimes g_i$$

Now $C(X)$ is the universal $C^*$-algebra generated by elements $\delta_1,\ldots,\delta_n$ which are mutually orthogonal self-adjoint projections. Writing these conditions in terms of the linearly independent elements $\varphi_i$ by means of the formulae $\delta_i=\sum_lU_{il}\varphi_l$, we find that the universal relations for $C(X)$ in terms of the elements $\varphi_i$ are as follows:
$$\sum_ic_i\varphi_i=1,\quad \varphi_i^*=\sum_jc_{ij}\varphi_j,\quad \varphi_i\varphi_j=\sum_kd_{ijk}\varphi_k$$

Let $\tilde{\Gamma}_U$ be the group in the statement. Since $\beta$ preserves these relations, we get:
$$c_i(g_i-1)=0,\quad c_{ij}(g_ig_j-1)=0,\quad d_{ijk}(g_ig_j-g_k)=0$$

Thus $\Gamma_U$ is a quotient of $\tilde{\Gamma}_U$. On the other hand, it is immediate that we have a coaction $C(X)\to C(X)\otimes C^*(\tilde{\Gamma}_U)$, hence $C(\tilde{\Gamma}_U)$ is a quotient of $C(S_n^+)$. Since $w$ is the fundamental corepresentation of $S_n^+$ with respect to the basis $\{\varphi_i\}$, it follows that the generator $w_{ii}$ is sent to $\tilde{g}_i\in\tilde{\Gamma}_U$, while $w_{ij}$ is sent to zero. We conclude that $\tilde{\Gamma}_U$ is a quotient of $\Gamma_U$. Since the above quotient maps send generators on generators, we conclude that $\Gamma_U=\tilde{\Gamma}_U$.
\end{proof}

As an example, let us work out the case where $U$ is a direct sum of Fourier matrices. We obtain here the class of maximal group dual subgroups of $S_n^+$:

\begin{proposition}
Let $U=diag(F_{n_1},\ldots,F_{n_k})$, where $F_r=(\xi^{ij})/\sqrt{r}$ with $\xi=e^{2\pi i/r}$ is the Fourier matrix. Then for $G=S_n^+$ with $n=\sum n_i$ we have $\Gamma_U=\mathbb Z_{n_1}*\ldots*\mathbb Z_{n_k}$.
\end{proposition}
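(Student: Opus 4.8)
The plan is to apply Theorem 5.3 directly, computing the three families of coefficients $c_i$, $c_{ij}$, $d_{ijk}$ for the block-diagonal matrix $U=\mathrm{diag}(F_{n_1},\ldots,F_{n_k})$ and reading off the resulting presentation. Because $U$ is block-diagonal, every index $i\in\{1,\ldots,n\}$ belongs to a unique block, and all three sums $\sum_l$ factor through that block: if the relevant indices lie in different blocks, the corresponding coefficient vanishes automatically. So I would first reindex the generators as $g_a^{(m)}$, where $m\in\{1,\ldots,k\}$ labels the block and $a\in\mathbb{Z}_{n_m}$ labels the position within the Fourier matrix $F_{n_m}$. The expectation is that the relations decouple completely across blocks, leaving within each block exactly the relations presenting $\mathbb{Z}_{n_m}$, and no relations linking distinct blocks — which is precisely the free product structure.

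\emph{The key computations within a single block.}
Inside a fixed block with Fourier matrix $F=F_r$, $F_{ab}=\xi^{ab}/\sqrt r$ with $\xi=e^{2\pi i/r}$, I would evaluate the three coefficients. First, $c_a=\sum_b F_{ab}=\tfrac1{\sqrt r}\sum_b\xi^{ab}$, which is $\sqrt r$ when $a\equiv 0$ and $0$ otherwise; so the relation $g_0=1$ is forced (here $a=0$ is the row corresponding to the constant character). Next, $c_{ab}=\sum_l F_{al}F_{bl}=\tfrac1r\sum_l\xi^{(a+b)l}$, nonzero exactly when $a+b\equiv 0$, giving $g_ag_b=1$ whenever $b\equiv -a$, i.e. $g_{-a}=g_a^{-1}$. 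Finally, the cubic coefficient $d_{abc}=\sum_l\overline{F_{al}}\,\overline{F_{bl}}F_{cl}=r^{-3/2}\sum_l\xi^{(-a-b+c)l}$ is nonzero exactly when $c\equiv a+b$, yielding $g_ag_b=g_{a+b}$. Thus within each block the generators satisfy exactly the multiplication table of $\mathbb{Z}_r=\mathbb{Z}_{n_m}$, with $g_a$ corresponding to the group element $a$; in particular a single generator $g_1$ suffices and $g_1^{n_m}=g_0=1$.

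\emph{Assembling the free product and the main obstacle.}
Since each coefficient vanishes whenever its indices are not all in the same block, there are no relations of any of the three types connecting generators from different blocks. Hence $\Gamma_U$ is generated by the $k$ families $\{g_a^{(m)}\}$ subject only to the within-block relations, and as these present $\mathbb{Z}_{n_m}$ with no cross-block relations, the universal property of the free product gives $\Gamma_U=\mathbb{Z}_{n_1}*\cdots*\mathbb{Z}_{n_k}$. The step requiring the most care is arguing that the relations I have listed are genuinely the \emph{only} relations — that is, that Theorem 5.3 presents $\Gamma_U$ on exactly these generators and relations and nothing further is implied. Concretely, I must check that the cubic relations $g_a g_b = g_{a+b}$ together with $g_0=1$ already imply the quadratic ones $g_a g_{-a}=1$ (so no redundancy is being hidden) and, more importantly, that no additional identification is secretly forced, e.g. by a cubic coefficient $d_{abc}$ being nonzero for an unexpected $(a,b,c)$; the orthogonality of characters used above is exactly what rules this out, so the main work is verifying those character sums carefully and confirming the cross-block vanishing.
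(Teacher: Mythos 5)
Your proposal is correct and follows essentially the same route as the paper's own proof: apply Theorem 5.3, evaluate $c_i$, $c_{ij}$, $d_{ijk}$ by character orthogonality within each Fourier block, note that all cross-block coefficients vanish by block-diagonality, and conclude that the within-block relations $g_0=1$, $g_ag_{-a}=1$, $g_ag_b=g_{a+b}$ present $\mathbb Z_{n_m}$, so that $\Gamma_U$ is the free product $\mathbb Z_{n_1}*\cdots*\mathbb Z_{n_k}$. Your extra care about redundancy of the quadratic relations and about Theorem 5.3 giving a genuine presentation is a welcome elaboration of what the paper leaves implicit, but it is not a different argument.
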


\begin{proof}
We apply Theorem 5.3, with the index set $X$ chosen to be $X=\mathbb Z_{n_1}\sqcup\ldots\sqcup\mathbb Z_{n_k}$. First, we have $c_i=\delta_{i0}$ for any $i$. Also, $c_{ij}=0$ unless $i,j,k$ belong to the same block to $U$, in which case $c_{ij}=\delta_{i+j,0}$, and $d_{ijk} =0$ unless $i,j,k$ belong to the same block of $U$, in which case $d_{ijk}=\delta_{i+j,k}$. Thus $\Gamma_U$ is the free product of $k$ groups which have generating relations $g_ig_j=g_{i+j}$ and $g_i^{-1}=g_{-i}$, so that $\Gamma_U=\mathbb Z_{n_1}*\ldots*\mathbb Z_{n_k}$, as stated.
\end{proof}

Finally, let us mention that the method in the proof of Theorem 5.3 applies as well to the more general situation where $G=G^+(X)$ is the quantum automorphism group of a finite noncommutative set $X$. This will be discussed in detail somewhere else.

\section{Concluding remarks}

We have seen in this paper that, given a compact connected Riemannian manifold $M$, its quantum isometry group $G=G^+(M)$ cannot contain non-classical group duals $\widehat{\Lambda}$. 

In addition, we have seen that in the case $G\subset U_n^+$, the classification of group dual subgroups of $G$ reduces to the computation of a certain family $F=\{\Gamma_U|U\in U_n\}$ of groups associated to $G$, which appear as ``universal objects'' for the problem $\widehat{\Lambda}\subset G$.

The computation of this family $F$ is therefore a key problem, that we solved here in the group dual case $G=\widehat{\Gamma}$, and in the quantum permutation group case $G=S_n^+$. 

The main question that we would like to address here concerns of course the potential unification of these computations. But we do not have further results here.

We have as well some other questions, of more theoretical nature. For instance the family of maximal group dual subgroups of a given compact quantum group $G$, taken modulo isomorphism, seems to be always finite. We do not know if this is the case.

\end{document}